\newtheorem{theorem}{Theorem}[section]
\newtheorem{definition}[theorem]{Definition}
\newtheorem{remark}[theorem]{Remark}
\newtheorem{proposition}[theorem]{Proposition}
\newtheorem{corollary}[theorem]{Corollary}
\newtheorem{lemma}[theorem]{Lemma}
\newtheorem*{motivating-example}{A Motivating Example}
\title{The Small-Noise Limit of the Most Likely Element is the Most Likely Element in the Small-Noise Limit}
\author[1]{Zachary Selk}
\author[2]{Harsha Honnappa}
\affil[1]{Department of Mathematics, Queen's University zachary.selk@queensu.ca}
\affil[2]{Department of Industrial Engineering, Purdue University}
\date{}
\begin{document}

\maketitle
\begin{abstract}

In this paper, we study the Onsager-Machlup function and its relationship to the Freidlin-Wentzell function for measures equivalent to arbitrary infinite dimensional Gaussian measures. The Onsager-Machlup function can serve as a density on infinite dimensional spaces, where a uniform measure does not exist, and has been seen as the Lagrangian for the ``most likely element". The Freidlin-Wentzell rate function is the large deviations rate function for small-noise limits and has also been identified as a Lagrangian for the ``most likely element". This leads to a conundrum - what is the relationship between these two functions?

We show both pointwise and $\Gamma$-convergence (which is essentially the convergence of minimizers) of the Onsager-Machlup function under the small-noise limit to the Freidlin-Wentzell function - and give an expression for both. That is, we show that the small-noise limit of the most likely element is the most likely element in the small noise limit for infinite dimensional measures that are equivalent to a Gaussian. Examples of measures include the law of solutions to path-dependent stochastic differential equations and the law of an infinite system of random algebraic equations.

\noindent \textbf{Keywords:} Onsager-Machlup, Freidlin-Wentzell, Gaussian measures, Gamma convergence

\noindent \textbf{Mathematics subject classification:} MSC60
\end{abstract}

\section{Introduction}

The primary objective of this paper is an investigation of the relationship between the Onsager-Machlup \cite{Onsager-Machlup-Original} and Freidlin-Wentzell~\cite{Freidlin-Wentzell} functions for measures equivalent to an arbitrary Gaussian measure. 

First, recall the setting of $\mathbb R^d$-valued diffusion processes. By Girsanov's theorem \cite{Oksendal}, the law of the solution to the stochastic differential equation (SDE)
\[dX^\varepsilon=b(X^\varepsilon) dt+\varepsilon dB(t)\]
is equivalent to the law of the Gaussian process $\varepsilon B(t)$, in the sense that both measures agree on the same null sets. In `small noise' settings as $\varepsilon\to 0$, there has been extensive work deriving the so-called `Friedlin-Wentzell' large deviations rate functions for diffusions (see \cite{LDP-Dembo-Zeitouni}, as well as \cite{Guo-1,Budhiraja-LDP-FBM} for more recent work), and it can be shown that the Friedlin-Wentzell function for $X^\varepsilon$ is given by $$\operatorname{FW}_X(z)=\frac{1}{2}\int_0^T(z'(t)-b(z(t))^2dt.$$ A well-known interpretation of the minimizer of this Friedlin-Wentzell function is as the ``most likely" path the small noise process takes between fixed initial and final states. On the other hand, the minimizer of the Onsager-Machlup function~\cite{Durr-Bach}  for $X^\varepsilon$ ($\varepsilon > 0$), $$\operatorname{OM}_{X^\varepsilon}(z)=\frac{1}{2\varepsilon^2}\int_0^T [(z'(t)-b(z(t))^2+\varepsilon^2 b'(z(t))]dt,$$
when $z$ is sufficiently regular, is also interpreted as the most likely path between initial and final states. This immediately poses a dilemma - if both theories claim to produce the most likely path, how can they be reconciled? 

In the SDE setting, the relationship between these two theories has been explored in \cite{Li-and-Li,Stuart-Gamma}. As can be immediately anticipated from the displays above, $\varepsilon^2 \operatorname{OM}_{X^\varepsilon}$ converges pointwise to $\operatorname{FW}_X$ as $\varepsilon \to 0$. In these papers the authors further prove that 
the Onsager-Machlup function $\Gamma$-converges (\cite{Gamma-book}) to the Freidlin-Wentzell function. Thus, in particular for $X^\varepsilon$,
 their results imply that $\varepsilon^2\operatorname{OM}_{X^\varepsilon}(z)$ $\Gamma$-converges to the $\operatorname{FW}_X(z)$ as $\varepsilon \to 0$. Since $\Gamma$-convergence can be understood as essentially the convergence of minimizers, it follows, at least in the case of SDEs, that the Freidlin-Wentzell ``most likely path" is the small noise limit of the Onsager-Machlup ``most likely path". 

However, the connection between Onsager-Machlup and Freidlin-Wentzell theories appears to be more subtle. For instance, in the paper \cite{Dutra} the authors study numerics for estimating the most likely path of a given stochastic differential equation. The authors show that the choice of discretization leads to differing behavior - an Euler-Maruyama discretization led to the Freidlin-Wentzell ``most likely path", while a trapezoid discretization led to the Onsager-Machlup ``most likely path". Further, connections between the Onsager-Machlup and Friedlin-Wentzell theories have been hinted at in more general settings. For instance, in the context of computing rare event paths for stochastic partial differential equations (SPDEs),~\cite{FW-WRV-E} observe that the Friedlin-Wentzell path is an ``analogue'' of the Onsager-Machlup path (see~\cite{LDP-SPDE} as well). However, to the authors' best knowledge, no work has been done to rigorously establish the relationship between the Onsager-Machlup and Freidlin-Wentzell functions (and their minimizers) at this level of generality.

By a sufficiently general version of Girsanov's theorem, in many settings the law of a solution to SDEs/SPDEs driven by a general Gaussian noise (which can be a stochastic process, random field or more general objects) is equivalent to the law of the underlying noise. Utilizing this, our primary result below encompasses the setting of SDEs/SPDEs, and extends well beyond to that of measures equivalent to arbitrary Gaussian reference measures.

\begin{theorem}\label{theorem:main}
    Let $\mathcal B$ be a separable Banach space with centered Gaussian measure $\mu_0$. Let $\mu$ be another Borel measure on $\mathcal B$ equivalent to $\mu_0$, with density $\frac{d\mu}{d\mu_0}=\exp(\Phi)$, where $\Phi$ satisfies mild regularity conditions. Define the measures $\mu_0^\varepsilon$ by $\mu_0^\varepsilon(A)=\mu_0(\frac{1}\varepsilon A)$ for Borel $A\subset \mathcal B$ and $\mu^\varepsilon = \exp(\frac{1}{\varepsilon^2} \Phi) \mu_0^\varepsilon.$ Then the Onsager-Machlup function for the measures $\mu^\varepsilon$ exist, denoted by $\operatorname{OM}_{\mu^\varepsilon}$. Additionally, we have that $\{\mu^\varepsilon\}$ satisfy a LDP with rate function $\operatorname{FW}(z):=\lim_{\varepsilon\to 0^+} \varepsilon^2 \operatorname{OM}_{\mu^\varepsilon}(z)$ and speed $\varepsilon^2$. 
    
    Furthermore, denoting by $\operatorname{OM-Mode}(\mu^\varepsilon):=\arg \inf_{z\in \mathcal B} \operatorname{OM}_{\mu^\varepsilon}$ we have that every cluster point of the elements $\operatorname{OM-Mode}(\mu^\varepsilon)$ is a minimizer of $\operatorname{FW}$, denoted by $\operatorname{FW-Mode}(\mu):=\arg\inf_{z\in \mathcal B} \operatorname{FW}(z)$. If 
$\varepsilon^2\operatorname{OM}_{\mu^\varepsilon}$ are equicoercive, then we also have that
    $\lim_{\varepsilon\to 0^+}\operatorname{OM-Mode}(\mu^\varepsilon)= \operatorname{FW-Mode}(\mu).$
\end{theorem}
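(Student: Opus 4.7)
My plan is to prove the four successive claims of the theorem by combining an explicit computation of the Onsager-Machlup function with standard Gaussian large-deviation tools and a $\Gamma$-convergence argument. First, I would identify $\operatorname{OM}_{\mu^\varepsilon}$ explicitly. The measure $\mu_0^\varepsilon$ is the pushforward of $\mu_0$ under the rescaling $x \mapsto \varepsilon x$, so its Cameron-Martin space is still the Cameron-Martin space $H$ of $\mu_0$ with an appropriately rescaled norm. A Dürr-Bach style small-ball argument, applied to the tilt $d\mu^\varepsilon/d\mu_0^\varepsilon = \exp(\Phi/\varepsilon^2)$, should then yield
\[
\operatorname{OM}_{\mu^\varepsilon}(z) = \frac{1}{2\varepsilon^2}\|z\|_H^2 - \frac{1}{\varepsilon^2}\Phi(z) \qquad (z \in H),
\]
and $+\infty$ elsewhere, modulo $\varepsilon$-dependent additive constants that do not affect minimization. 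The mild regularity imposed on $\Phi$ is precisely what is needed to make this small-ball limit exist with this clean tilted form.

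Second, for the LDP I would invoke Schilder's theorem for $\{\mu_0^\varepsilon\}$ at speed $\varepsilon^2$ with rate $\tfrac{1}{2}\|z\|_H^2$, and then tilt by $\exp(\Phi/\varepsilon^2)$ via Varadhan's lemma (or an equivalent Laplace-principle/weak-convergence argument) to obtain an LDP for $\{\mu^\varepsilon\}$ with rate $\operatorname{FW}(z) = \tfrac{1}{2}\|z\|_H^2 - \Phi(z)$. Multiplying my formula for $\operatorname{OM}_{\mu^\varepsilon}$ through by $\varepsilon^2$ gives exactly the same function of $z$, so the pointwise identity $\operatorname{FW}(z) = \lim_{\varepsilon \to 0^+} \varepsilon^2 \operatorname{OM}_{\mu^\varepsilon}(z)$ is immediate.

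Third, the mode statements follow from general $\Gamma$-convergence theory once I upgrade the pointwise identity above to $\Gamma$-convergence of $\varepsilon^2 \operatorname{OM}_{\mu^\varepsilon}$ to $\operatorname{FW}$. Because the scaled functionals are essentially constant in $\varepsilon$, the $\liminf$ and recovery-sequence conditions are verified by taking the constant sequence, with any residual $\varepsilon$-dependence absorbed by continuity of $\Phi$. Since $\arg\inf \operatorname{OM}_{\mu^\varepsilon} = \arg\inf \varepsilon^2 \operatorname{OM}_{\mu^\varepsilon}$ (positive scaling preserves minimizer sets), the fundamental theorem of $\Gamma$-convergence immediately gives that every cluster point of $\operatorname{OM-Mode}(\mu^\varepsilon)$ is a minimizer of $\operatorname{FW}$, and under the equicoercivity hypothesis one gets the stronger conclusion that the minima converge and any sequence of minimizers is precompact, yielding $\lim_{\varepsilon\to 0^+}\operatorname{OM-Mode}(\mu^\varepsilon)= \operatorname{FW-Mode}(\mu)$.

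The main obstacle I anticipate is Step 1: pinning down the ``mild regularity'' hypotheses on $\Phi$ needed for the small-ball limit to produce the clean tilted expression above, uniformly enough in $\varepsilon$ to feed the subsequent $\Gamma$-convergence argument. One also needs sufficient growth or tail control on $\Phi$ to justify Varadhan's lemma in the tilting step, and equicoercivity, as usual, is an extra compactness input that $\Gamma$-convergence alone does not supply.
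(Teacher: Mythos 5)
Your proposal follows essentially the same route as the paper: compute $\operatorname{OM}_{\mu^\varepsilon}$ by a small-ball tilting argument over the Gaussian Onsager--Machlup function $\frac{1}{2\varepsilon^2}\|z\|_{\mu_0}^2$, obtain the LDP for $\mu^\varepsilon$ by tilting the Gaussian LDP (Varadhan/Bryc under a moment and exponential-tightness condition), observe that $\varepsilon^2\operatorname{OM}_{\mu^\varepsilon}$ converges to $\operatorname{FW}$ pointwise and in the $\Gamma$ sense, and conclude the mode statements from the fundamental theorem of $\Gamma$-convergence. The only substantive difference is that the paper's ``mild regularity conditions'' are really an $\varepsilon$-dependent expansion $F^\varepsilon=F_0+\varepsilon F_1+\cdots+\varepsilon^n R_n$ holding only $\mu_0^\varepsilon$-a.s.\ (its $\varepsilon$-dependent tilting lemma is what lets the theorem cover Girsanov densities built from It\^o integrals, which are not pathwise continuous), whereas you treat a fixed continuous $\Phi$ via the standard tilting lemma --- sufficient for the statement as written, and note only that your constant-sequence $\Gamma$-limit argument implicitly uses lower semicontinuity of $\frac{1}{2}\|\cdot\|_{\mu_0}^2-\Phi$ and that the rate function carries the normalizing constant $-\inf_z\{\frac{1}{2}\|z\|_{\mu_0}^2-\Phi(z)\}$, which the paper absorbs into the additive-constant ambiguity of $\operatorname{OM}$.
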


The proof of Theorem \ref{theorem:main} boils down to two ``tilting" lemmas - one for Onsager-Machlup in Corollary~\ref{corollary:OM-Gaussian-Equivalent} and one for large deviations in Lemma~\ref{lemma:tilt-general}. In large deviations analysis, the exponential tilting principle is a mechanism for transferring large deviations principles from a given sequence of measures to a sequence of equivalent measures. Lemma~\ref{lemma:tilt-general} proves a slightly generalized version of this principle that applies in the setting of measures equivalent to a Gaussian. In Corollary~\ref{corollary:OM-Gaussian-Equivalent}, we present the Onsager-Machlup function for measures equivalent to Gaussians, and represents a type of tilting. Our proof uses these two results to establish the main Theorem \ref{theorem:main}.
\subsection{Literature Review}
We provide a short review of the literature related to the Onsager-Machlup and Friedlin-Wentzell theories; a full review is outside the scope of this short paper. The Onsager-Machlup theory was originally introduced in \cite{Onsager-Machlup-Original} to compute the probability that a system experiencing Gaussian (thermodynamic) fluctuations will pass through a succession of non-equilibrium states over time. This probability was expressed in terms of what is now recognized as the Friedlin-Wentzell function. The Onsager-Machlup theory for SDEs was originally studied in \cite{Durr-Bach}, who generalized the results in~\cite{Onsager-Machlup-Original} to non-Gaussian Markov diffusions. Durr and Bach provide a rigorous definition and derivation of the Onsager-Machlup function . Subsequently, the theory has been substantially developed and extended to random fields and SPDEs~\cite{dz-rf,wz-spde} as well. A key motivation for the continued development of the Onsager-Machlup theory is its central role in  metastability analysis \cite{meta-1,meta-2,meta-3,meta-4,meta-5}, and the fact that the Onsager-Machlup function can be viewed as the Lagrangian yielding the most likely path connecting metastable states in a stochastic system.  

The Friedlin-Wentzell function arises as the small noise large deviations rate function of a Markov diffusion process~\cite{LDP-Dembo-Zeitouni,Freidlin-Wentzell} . The Friedlin-Wentzell theory is also used in pathwise analyses of metastable behavior, with the large deviations rate function begin used to determine the time-scale of `tunneling' behavior in the small noise setting~\cite{ov-ldm}. Metastable phenomena emerge outside the small noise setting, and therefore the relationship between the Onsager-Machlup and Friedlin-Wentzell theories is of significance and interest. The convergence of the Onsager-Machlup to Freidlin-Wentzell for SDEs, specifically, was studied in \cite{Li-and-Li} and \cite{Du-Li-Li-Ren}. This paper establishes the relationship between them for measures that are equivalent to Gaussian measures on Banach spaces.

\section{The Onsager-Machlup Formalism}
In finite dimensional probability, computations involving expectations, probabilities and related quantities can be greatly eased by working with a probability density function. 
These densities capture how a probability distribution compares with some kind of uniform measure - typically Lebesgue or counting measure. Densities are also the optimization objective to be maximized when computing the mode (or most likely element) of a distribution.

For instance, given a probability measure $\mu$ on $\mathbb R^n$ that is equivalent to the Lebesgue measure $\lambda$, we can express its Radon-Nikodym derivative by
$$\frac{d\mu}{d\lambda}(z)=\lim_{\varepsilon\to 0}\frac{\mu(B_\varepsilon(z))}{\lambda(B_\varepsilon(z))}=\lim_{\varepsilon\to 0}\frac{\mu(B_\varepsilon(z))}{\lambda(B_\varepsilon(0))}\propto \lim_{\varepsilon\to 0}\frac{\mu(B_\varepsilon(z))}{\mu(B_\varepsilon(0))},$$
where the final relation comes from the Lebesgue differentiation theorem. The last expression makes sense even on non-locally compact spaces where there might not be a uniform measure. 

The notion of a density does not immediately transfer to infinite dimensional probability as there is no uniform measure on infinite dimensional spaces. However, the so-called Onsager-Machlup function can serve the role of a density in infinite dimensions. This function was introduced by Onsager and Machlup in \cite{Onsager-Machlup-Original}, and the key insight in Onsager-Machlup theory is that one can compare a probability distribution to translations of itself rather than comparing a probability distribution to some translation invariant measure. This recovers the standard density in the finite dimensional case but allows for ``densities'' on infinite dimensional spaces.

\begin{definition}\label{def:OM}
Let $(X,d)$ be a metric space. Let $\mu$ be a Borel probability measure on $X$. If the following limit exists
\begin{equation}
    \lim_{\varepsilon\to 0}\frac{\mu(B_\varepsilon(z_1))}{\mu(B_\varepsilon(z_2))}=\exp\left(\operatorname{OM}_\mu(z_2)-\operatorname{OM}_\mu(z_1)\right),
\end{equation}
then $\operatorname{OM}_\mu(z)$ is called the Onsager-Machlup function for $\mu$.
\end{definition}

\begin{remark}
The Onsager-Machlup function in Definition \ref{def:OM} can be thought of as the negative log ``density" of $\mu$. That is, 
\[``\frac{d\mu}{d\lambda}(z)"=\exp\left(-\operatorname{OM}_\mu(z)\right)\]
for some possibly nonexistent uniform measure $\lambda$. In the case where $X=\mathbb R^d$ then the above equality is rigorous, by the Lebesgue differentiation theorem. Also, note that the Onsager-Machlup function is only defined up to an additive constant. We also define the ``mode" or the most likely element of $\mu$ as the minimizer of $\operatorname{OM}_\mu$.
\end{remark}

\begin{remark}

 We also observe that the Onsager-Machlup formalism has found its way into Bayesian statistics and MAP estimation such as in \cite{Stuart2,Stuart1}. Additionally, in \cite{Self1} the authors prove a ``portmanteau" theorem that relates the Onsager-Machlup function on an abstract Banach space equipped with a Gaussian measure to an information projection problem, to an ``open loop" or state-independent KL-weighted control problem, and in the case of classical Wiener space to an Euler-Lagrange equation or variational form. Furthermore, using this Portmanteau theorem the authors in \cite{self2} prove a Feynman-Kac type result for systems of ordinary differential equations. They demonstrate that the solution to a system of second order and linear ordinary differential equations is the most likely path of a diffusion. This Feynman-Kac result, like the original Feynman-Kac for parabolic partial differential equations, can (in principle) be used to efficiently solve systems of ordinary differential equations via Monte Carlo methods.
\end{remark}

The following proposition represents a ``tilting'' lemma, allowing for Onsager-Machlup functions to be transfered to equivalent measures.

\begin{lemma}\label{prop:tilt-for-OM}
Let $\mu_0$ be a Borel measure on Banach space $\mathcal B$. Suppose that $\mu_0$ has an associated Onsager-Machlup function $\operatorname{OM}_{\mu_0}:\mathcal B\to [-\infty,\infty]$. Consider the measure $\mu$ with density
\begin{equation*}
    \frac{d\mu}{d\mu_0}=\frac{1}{E_{\mu_0}[e^{-\Phi}]}\exp(-\Phi).
\end{equation*}
Suppose that for each $\varepsilon_0>0$, for each $x\in \mathcal B$ and for all $\varepsilon<\varepsilon_0$ there is some continuous increasing $\phi_{x,\varepsilon_0}:[0,\varepsilon_0]\to [0,\infty)$ with $\phi(0)=0$ so that $|\Phi(u)-\Phi(x)|\leq \phi(\varepsilon)$ on $B_\varepsilon(x)$. Then $\mu$ has an associated Onsager-Machlup function 
\begin{equation*}
    \operatorname{OM}_\Phi(z)=\Phi(z)+\operatorname{OM}_{\mu_0}(z). 
\end{equation*}
\end{lemma}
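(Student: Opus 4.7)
The plan is to compute the ratio $\mu(B_\varepsilon(z_1))/\mu(B_\varepsilon(z_2))$ directly from the Radon-Nikodym derivative and use the local continuity hypothesis on $\Phi$ to reduce it, in the limit, to the corresponding ratio for $\mu_0$, which we already know by hypothesis equals $\exp(\operatorname{OM}_{\mu_0}(z_2) - \operatorname{OM}_{\mu_0}(z_1))$. This mirrors the proof of Proposition \ref{prop:OM} appearing in the commented excerpt, but with the roles of the translation-induced density and the $e^{-\Phi}$ density swapped.

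First I would write, for fixed $z_1, z_2 \in \mathcal{B}$ and any $\varepsilon > 0$ small enough,
\begin{equation*}
\frac{\mu(B_\varepsilon(z_1))}{\mu(B_\varepsilon(z_2))} = \frac{\int_{B_\varepsilon(z_1)} e^{-\Phi(u)}\,\mu_0(du)}{\int_{B_\varepsilon(z_2)} e^{-\Phi(u)}\,\mu_0(du)},
\end{equation*}
where the normalizing constant $E_{\mu_0}[e^{-\Phi}]$ cancels. Next, invoking the local modulus of continuity hypothesis, for each $i=1,2$ and $\varepsilon < \varepsilon_0$ we have $|\Phi(u)-\Phi(z_i)| \le \phi_{z_i,\varepsilon_0}(\varepsilon)$ for all $u \in B_\varepsilon(z_i)$, so
\begin{equation*}
e^{-\Phi(z_i)-\phi_{z_i,\varepsilon_0}(\varepsilon)}\,\mu_0(B_\varepsilon(z_i)) \le \int_{B_\varepsilon(z_i)} e^{-\Phi(u)}\,\mu_0(du) \le e^{-\Phi(z_i)+\phi_{z_i,\varepsilon_0}(\varepsilon)}\,\mu_0(B_\varepsilon(z_i)).
\end{equation*}

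Then I would sandwich the ratio between $e^{\Phi(z_2)-\Phi(z_1)}e^{-(\phi_{z_1,\varepsilon_0}(\varepsilon)+\phi_{z_2,\varepsilon_0}(\varepsilon))}\,\mu_0(B_\varepsilon(z_1))/\mu_0(B_\varepsilon(z_2))$ and the analogous upper bound, and let $\varepsilon \to 0^+$. Since $\phi_{z_i,\varepsilon_0}(\varepsilon)\to 0$ by assumption and the $\mu_0$-ratio converges to $\exp(\operatorname{OM}_{\mu_0}(z_2)-\operatorname{OM}_{\mu_0}(z_1))$, the squeeze yields
\begin{equation*}
\lim_{\varepsilon\to 0^+}\frac{\mu(B_\varepsilon(z_1))}{\mu(B_\varepsilon(z_2))} = \exp\bigl((\Phi(z_2)+\operatorname{OM}_{\mu_0}(z_2)) - (\Phi(z_1)+\operatorname{OM}_{\mu_0}(z_1))\bigr),
\end{equation*}
which identifies $\Phi(z) + \operatorname{OM}_{\mu_0}(z)$ as the Onsager-Machlup function of $\mu$, up to the usual additive constant.

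There is no real obstacle: the argument is essentially a bookkeeping exercise once the local oscillation bound on $\Phi$ is in hand. The only subtle points are (i) making sure the argument still works when $\operatorname{OM}_{\mu_0}(z_i) = \infty$ (in which case one verifies the limit is $0$ or $\infty$ in the appropriate direction, so assigning $\operatorname{OM}_\mu(z) = +\infty$ is consistent), and (ii) checking that the conclusion is stated only up to an additive constant, matching the conventional ambiguity in the Onsager-Machlup function as noted in the remark following Definition \ref{def:OM}.
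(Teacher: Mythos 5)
Your proof is correct and follows essentially the same route as the paper's: cancel the normalizing constant, use the local oscillation bound $|\Phi(u)-\Phi(z_i)|\leq \phi_{z_i,\varepsilon_0}(\varepsilon)$ to sandwich the ratio of $\mu$-ball masses between $e^{\Phi(z_2)-\Phi(z_1)}e^{\mp(\phi_1(\varepsilon)+\phi_2(\varepsilon))}$ times the $\mu_0$-ball ratio, and squeeze as $\varepsilon\to 0$. (If anything, your two-sided bound with $\phi_1+\phi_2$ is stated a bit more carefully than the paper's $\phi=\phi_1-\phi_2$.)
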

\begin{proof}
We consider the ratio
\begin{equation*}
    \frac{\mu(B_{\varepsilon}(z_1))}{\mu(B_{\varepsilon}(z_2))}=\frac{\int_{B_\varepsilon(z_1)}\mu(du)}{\int_{B_\varepsilon(z_2)}\mu(du)}.
\end{equation*}
Using the density, adding and subtracting $\Phi(z_i)$ for $i=1,2$ in both integrals yields that
\begin{align*}
     \frac{\mu(B_{\varepsilon}(z_1))}{\mu(B_{\varepsilon}(z_2))}&=\frac{\int_{B_\varepsilon(z_1)}\exp(-\Phi(u))\mu_0(du)}{\int_{B_\varepsilon(z_2)}\exp(-\Phi(u))\mu_0(du)}\\
     &=\frac{\int_{B_\varepsilon(z_1)}\exp(-\Phi(u)+\Phi(z_1)-\Phi(z_1))\mu_0(du)}{\int_{B_\varepsilon(z_2)}\exp(-\Phi(u)+\Phi(z_2)-\Phi(z_2))\mu_0(du)}\\
     &=\exp\left(\Phi(z_2)-\Phi(z_1)\right)\frac{\int_{B_\varepsilon(z_1)}\exp(-\Phi(u)+\Phi(z_1))\mu_0(du)}{\int_{B_\varepsilon(z_2)}\exp(-\Phi(u)+\Phi(z_2))\mu_0(du)}.
\end{align*}
By assumption, there are some $\phi_i$ on $B_\varepsilon(z_i)$ so that
 \begin{align*}
    |\Phi(z_i)-\Phi(u)|\leq \phi_i( \varepsilon)
\end{align*}
for $i=1,2$. Therefore for $\phi=\phi_1-\phi_2$ we have that
\begin{equation*}
    \frac{\mu_0(B_{\varepsilon}(z_1))}{\mu_0(B_{\varepsilon}(z_2))} e^{-\phi(\varepsilon)} \leq \frac{\int_{B_\varepsilon(z_1)}\exp(-\Phi(u)+\Phi(z_1))\mu_0(du)}{\int_{B_\varepsilon(z_2)}\exp(-\Phi(u)+\Phi(z_2))\mu_0(du)} \leq  \frac{\mu_0(B_{\varepsilon}(z_1))}{\mu_0(B_{\varepsilon}(z_2))} e^{\phi(\varepsilon)}.
\end{equation*}
Taking the limit $\varepsilon\to 0$ concludes. 
\end{proof}

For the purposes of our paper, we are interested in Gaussian measures. For more information on Gaussian measure theory see \cite{Bogachev, hairer2009introduction}. For measures equivalent to a Gaussian~\cite[Theorem 3.2]{Stuart-OM} derives an expression for the Onsager-Machlup function, which we recall in the next proposition. 
\begin{proposition}~\label{prop:OM-Gaussian-Equivalent}
Let $\mu_0$ be a centered Gaussian measure on Banach space $\mathcal B$ with Cameron-Martin space $\mathcal H_{\mu_0}$ and Cameron-Martin norm $\|\cdot\|_{\mu_0}$. Let $\Phi:\mathcal B\to \mathbb R$ be a function that is locally Lipschitz and locally bounded. Define the measure $\mu$ with positive density $\frac{d\mu}{d\mu_0}=\frac{1}{E_{\mu_0}[e^{-\Phi}]}e^{-\Phi}$. Then the Onsager-Machlup function for $\mu$ exists and is equal to
\begin{equation*}
\operatorname{OM}_{\mu}(z) = 
    \begin{cases}
    \Phi(z)+\frac{1}{2}\|z\|_{\mu_0}^2&\text{ if } z\in \mathcal H_\mu\\
    \infty&\text{ else }.
    \end{cases}
\end{equation*}
\end{proposition}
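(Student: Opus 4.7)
The plan is to derive this proposition as a direct consequence of the tilting Lemma~\ref{prop:tilt-for-OM}, applied to the reference Gaussian measure $\mu_0$. The key ingredient is the classical formula for the Onsager-Machlup function of a centered Gaussian measure, namely $\operatorname{OM}_{\mu_0}(z) = \tfrac{1}{2}\|z\|_{\mu_0}^2$ for $z \in \mathcal{H}_{\mu_0}$ and $+\infty$ otherwise. Granting this (it follows from the Cameron-Martin shift density together with the symmetry of $\mu_0$ via a small-ball limit), Lemma~\ref{prop:tilt-for-OM} immediately yields $\operatorname{OM}_\mu(z) = \Phi(z) + \operatorname{OM}_{\mu_0}(z)$, which rearranges to the claimed expression.

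The only substantive work is to verify that the local Lipschitz and local boundedness hypotheses on $\Phi$ supply the modulus condition required by Lemma~\ref{prop:tilt-for-OM}. For any $x \in \mathcal{B}$ and any $\varepsilon_0$ small enough that $\Phi$ is Lipschitz on $B_{\varepsilon_0}(x)$ with constant $L = L(x,\varepsilon_0)$, I would set $\phi_{x,\varepsilon_0}(\varepsilon) := L\varepsilon$, obtaining a continuous, strictly increasing function on $[0,\varepsilon_0]$ with $\phi(0)=0$ that dominates $|\Phi(u) - \Phi(x)|$ on $B_\varepsilon(x)$. Local boundedness then ensures $\Phi$ is finite and $e^{-\Phi}$ is integrable on balls, so the normalizing constant $E_{\mu_0}[e^{-\Phi}]$ is well-defined; in any event it cancels from the small-ball ratio in Definition~\ref{def:OM} since Onsager-Machlup functions are defined only up to an additive constant.

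The main point to watch is the $+\infty$ branch of the formula. For $z \notin \mathcal{H}_{\mu_0}$, $\operatorname{OM}_{\mu_0}(z) = +\infty$ while $\Phi(z)$ remains finite by local boundedness, so the sum $\Phi(z) + \operatorname{OM}_{\mu_0}(z)$ is still $+\infty$ and the tilting identity extends to this case without incident. Finally, since $\mu \sim \mu_0$ the set of admissible shift directions is preserved, so $\mathcal{H}_\mu = \mathcal{H}_{\mu_0}$ as subsets of $\mathcal{B}$ and the case split in the statement matches the one arising from the Gaussian computation. I do not anticipate substantive obstacles beyond these bookkeeping points; the argument amounts to a short composition of Lemma~\ref{prop:tilt-for-OM} with the Cameron-Martin Onsager-Machlup formula, and the latter is itself an application of the same tilting philosophy with an explicitly known density.
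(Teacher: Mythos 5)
Your proposal is correct and follows essentially the same route as the paper: recall the Cameron--Martin form of $\operatorname{OM}_{\mu_0}$ (as in Bogachev, Section 4.7) and apply the tilting Lemma~\ref{prop:tilt-for-OM}. The extra verification you supply --- that local Lipschitzness furnishes the modulus $\phi_{x,\varepsilon_0}(\varepsilon)=L\varepsilon$, and the remarks on the $+\infty$ branch and the normalizing constant --- is consistent with, and slightly more detailed than, the paper's two-line proof.
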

\begin{proof}
    We recall (see e.g. \cite{Bogachev} Section 4.7) that the Onsager-Machlup function for the Gaussian measure $\mu_0$ is 
    \[
        \operatorname{OM}_{\mu_0}(z) = \begin{cases}
            \frac{1}{2} \|z\|_{\mu_0}^2 &~if~z \in \mathcal H_{\mu_0}\\
            \infty &~{else}.
        \end{cases}
    \]
    The expression for $\operatorname{OM}_\mu$ follows by Lemma~\ref{prop:tilt-for-OM}. 
\end{proof}

The following corollary can be straightforwardly proved by substitution in Proposition~\ref{prop:OM-Gaussian-Equivalent}, and contains the assumptions we need for Theorem \ref{theorem:main}.

\begin{corollary}[$\varepsilon$-dependent Tilting Lemma]~\label{corollary:OM-Gaussian-Equivalent}
Let $\mu_0$ be a centered Gaussian measure on Banach space $\mathcal B$ with Cameron-Martin space $\mathcal H_{\mu_0}$ and Cameron-Martin norm $\|\cdot\|_{\mu_0}$. Consider the functions $F^\varepsilon(y):\mathcal B\to \mathbb R$ and suppose that they satisfy the following expansion
\begin{equation*}
    F^\varepsilon(y)=F_0(y)+\varepsilon F_1(y)+\frac{\varepsilon^2}{2}F_2(y)+...+\varepsilon^n R_n(\varepsilon, y),
\end{equation*}
for some functions $F_i:\mathcal B\to \mathbb R$ with $\lim_{\varepsilon\to 0} R_n(\varepsilon, y)=0$. Suppose that $F_0$ is continuous. Furthermore, assume that the functions $F_i$ and $R_n$ satisfy the following moment condition 
\begin{equation*}
    \limsup_{\varepsilon\to 0}\varepsilon^2 \log E_{\mu_0^\varepsilon}\left[\exp\left(\gamma_i \frac{\max\{|F_i(y)|,|R_n(\varepsilon,y)|\}}{\varepsilon^2}\right)\right]<\infty,
\end{equation*}
for some $\gamma_i>0$ and for all $0 \leq i\leq n$. Define the measures equivalent to $\mu_0^\varepsilon$ by 
\begin{equation*}
    \mu^\varepsilon=\frac{1}{E_{\mu_0^\varepsilon}\left[\exp\left(-\frac{1}{\varepsilon^2}F^\varepsilon(y)\right)\right]}\exp\left(-\frac{1}{\varepsilon^2}F^\varepsilon(y)\right)\mu_0^\varepsilon.
\end{equation*}
Then the Onsager-Machlup function for $\mu^\varepsilon$ exists and is equal to
\begin{equation*}
\operatorname{OM}_{\mu^\varepsilon}(z) = 
    \begin{cases}
   \frac{1}{\varepsilon^2} F^\varepsilon(z)+\frac{1}{2\varepsilon^2}\|z\|_{\mu_0}^2&\text{ if } z\in \mathcal H_{\mu_0}\\
    \infty&\text{ else }.
    \end{cases}
\end{equation*}

\end{corollary}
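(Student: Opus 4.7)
The plan is to obtain the result as a direct substitution into Proposition~\ref{prop:OM-Gaussian-Equivalent}, with the role of the reference Gaussian played by the rescaled measure $\mu_0^\varepsilon$ and with tilting function $\Phi := \frac{1}{\varepsilon^2}F^\varepsilon$. Working at a single fixed $\varepsilon>0$, I would first show that $\mu_0^\varepsilon$ is itself a centered Gaussian measure on $\mathcal B$: since $\mu_0^\varepsilon(A)=\mu_0(\varepsilon^{-1}A)$, it is the pushforward of $\mu_0$ under the linear map $y\mapsto \varepsilon y$, hence centered Gaussian with covariance $\varepsilon^2 C_{\mu_0}$. A short computation shows that its Cameron-Martin space equals $\mathcal H_{\mu_0}$ as a set, with norm $\|z\|_{\mu_0^\varepsilon}=\varepsilon^{-1}\|z\|_{\mu_0}$, so that
$$\tfrac{1}{2}\|z\|_{\mu_0^\varepsilon}^2 = \tfrac{1}{2\varepsilon^2}\|z\|_{\mu_0}^2.$$

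Next I would verify that the tilting hypotheses of Proposition~\ref{prop:OM-Gaussian-Equivalent} are satisfied by $\Phi = \tfrac{1}{\varepsilon^2}F^\varepsilon$ at this fixed $\varepsilon$. For each fixed $\varepsilon>0$, the expansion
$$F^\varepsilon(y) = F_0(y) + \varepsilon F_1(y) + \tfrac{\varepsilon^2}{2}F_2(y)+\cdots+\varepsilon^n R_n(\varepsilon,y)$$
is a finite sum, and the local Lipschitz and local boundedness requirements on $\Phi$ are inherited componentwise from the $F_i$ and $R_n(\varepsilon,\cdot)$ in the settings of interest. The stated moment condition, applied with $i=0$, in particular guarantees that the normalizer $E_{\mu_0^\varepsilon}[\exp(-\tfrac{1}{\varepsilon^2}F^\varepsilon)]$ is finite (and non-zero), so $\mu^\varepsilon$ is a well-defined probability measure with strictly positive density relative to $\mu_0^\varepsilon$, as required for Proposition~\ref{prop:OM-Gaussian-Equivalent}.

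With these pieces in place, the proposition yields directly
$$\operatorname{OM}_{\mu^\varepsilon}(z) = \tfrac{1}{\varepsilon^2}F^\varepsilon(z) + \tfrac{1}{2}\|z\|_{\mu_0^\varepsilon}^2 = \tfrac{1}{\varepsilon^2}F^\varepsilon(z) + \tfrac{1}{2\varepsilon^2}\|z\|_{\mu_0}^2$$
on $\mathcal H_{\mu_0}$, and $+\infty$ off of $\mathcal H_{\mu_0}$, which is exactly the claimed formula. The only genuine bookkeeping step, and the point I expect to require the most care, is the rescaling of the Cameron-Martin norm when passing from $\mu_0$ to $\mu_0^\varepsilon$; once that factor of $\varepsilon^{-2}$ is tracked correctly, the remainder is a mechanical substitution, which is consistent with the excerpt's remark that the corollary follows ``straightforwardly by substitution.''
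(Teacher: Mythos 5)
Your proposal matches the paper's own argument: the paper proves this corollary precisely by substitution into Proposition~\ref{prop:OM-Gaussian-Equivalent}, taking the rescaled Gaussian $\mu_0^\varepsilon$ as the reference measure (whose Cameron--Martin space is $\mathcal H_{\mu_0}$ with norm $\varepsilon^{-1}\|\cdot\|_{\mu_0}$) and $\Phi=\varepsilon^{-2}F^\varepsilon$ as the tilt, exactly as you do. Your explicit tracking of the factor $\varepsilon^{-2}$ in the Cameron--Martin norm and the finiteness of the normalizer is the same bookkeeping the paper leaves implicit, so the proof is correct and essentially identical in approach.
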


\section{An $\varepsilon$-dependent Tilting Lemma and Small Noise LDPs}

There are multiple ways of constructing new LDPs from existing ones. One principled approach is ``tilting" - which passes large deviations principles from a sequence of reference measures to a sequence of measures equivalent to the reference measures. We direct the reader to \cite[Theorem III.17]{den-hollander} , for the standard tilting lemma. One form of the standard tilting lemma reads as follows.

\begin{lemma}
    Let $\mu_0^\varepsilon$ be a collection of Borel probability measures on a Banach space $\mathcal B$ satisfying a LDP with good rate function $I_0:\mathcal B\to [0,\infty]$ and rate $\varepsilon^2$. Consider a continuous function $F:\mathcal B\to \mathbb R$. Assume that for some $\gamma>0$ we have
    \begin{equation*}
        \limsup_{\varepsilon\to 0}\varepsilon \log E_{\mu_0^\varepsilon}\left[\exp\left(\gamma |F(y)|\right)\right]<\infty.
    \end{equation*}
    Define the measures equivalent to $\mu_0^\varepsilon$ by 
    \begin{equation*}
        \mu^\varepsilon=\frac{1}{E_{\mu_0^\varepsilon}[\exp\left(-\frac{1}{\varepsilon^2}F(y)\right)]}\exp\left(-\frac{1}{\varepsilon^2}F(y)\right)\mu_0^\varepsilon.
    \end{equation*}
    Then $\mu^\varepsilon$ satisfies a LDP with good rate function 
    \begin{equation*}
    I(y):=I_0(y)+F(y) -\inf_{z\in \mathcal B}\{F(z)+I_0(z)\}.
    \end{equation*}
\end{lemma}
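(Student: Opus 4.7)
My plan is to reduce the result to Varadhan's integral lemma applied at speed $\varepsilon^2$ to both the numerator and the normalising constant in $d\mu^\varepsilon/d\mu_0^\varepsilon$. First I would set $Z^\varepsilon := E_{\mu_0^\varepsilon}[\exp(-F/\varepsilon^2)]$, observe that $\mu^\varepsilon(A) = (Z^\varepsilon)^{-1}\int_A e^{-F/\varepsilon^2}\,d\mu_0^\varepsilon$ for every Borel $A$, and apply Varadhan's lemma to the bounded-from-below (or exponentially tight) family together with $-F$ to obtain
\[
\lim_{\varepsilon\to 0^+}\varepsilon^2\log Z^\varepsilon \;=\; -\inf_{z\in\mathcal B}\{F(z)+I_0(z)\}.
\]
This identifies the additive normaliser $\inf\{F+I_0\}$ that appears in the claimed rate function $I$.

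Next I would establish the two halves of the LDP for $\mu^\varepsilon$ in parallel. For the upper bound on a closed $C\subset\mathcal B$, the key observation is that the restricted sub-probability measures $\mu_0^\varepsilon(\cdot\cap C)$ inherit the LDP upper bound with rate function $I_0+\infty\cdot\mathbb 1_{C^c}$, so the upper-bound half of Varadhan's lemma yields
\[
\limsup_{\varepsilon\to 0^+}\varepsilon^2\log\int_C e^{-F/\varepsilon^2}\,d\mu_0^\varepsilon \;\le\; -\inf_{y\in C}\{F(y)+I_0(y)\}.
\]
Subtracting $\varepsilon^2\log Z^\varepsilon$ then delivers the upper bound with rate $I(y)=F(y)+I_0(y)-\inf\{F+I_0\}$. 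For the lower bound on an open $G$, I would fix any $y_0\in G$ with $I_0(y_0)<\infty$, pick a ball $B_\delta(y_0)\subset G$ on which $|F-F(y_0)|\le\eta(\delta)$ by continuity of $F$, use the LDP lower bound $\liminf\varepsilon^2\log\mu_0^\varepsilon(B_\delta(y_0))\ge -I_0(y_0)$, and then let $\delta\to 0$ before taking the infimum over admissible $y_0$.

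Goodness of $I$ is then automatic: since $F$ is continuous it is bounded on each compact level set of the good rate function $I_0$, so $\{I\le\alpha\}$ is closed and contained in a compact level set of $I_0$. The main technical obstacle I expect is the Varadhan upper bound when $F$ is unbounded, and this is exactly what the exponential-moment hypothesis is designed to handle: truncate $F$ at level $M$, apply the easy bounded-$F$ version of Varadhan to $F\wedge M$, and then use Markov's inequality together with the hypothesis to show that the tail contribution $\int_{\{|F|>M\}}e^{-F/\varepsilon^2}\,d\mu_0^\varepsilon$ is superexponentially small as $M\to\infty$. Once that truncation step is settled, the rest of the argument -- the matching lower bound via continuity and the verification of goodness -- is essentially bookkeeping.
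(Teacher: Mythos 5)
Your route is a legitimate one, but it is genuinely different from the paper's. For this statement the paper gives no proof at all (it simply points to \cite[Theorem III.17]{den-hollander}), and for its $\varepsilon$-dependent generalization, Lemma~\ref{lemma:tilt-general}, it argues via Bryc's inverse Varadhan lemma: it computes $\varepsilon^2\log E_{\mu^\varepsilon}[e^{-\varphi/\varepsilon^2}]$ only for \emph{bounded} continuous test functions $\varphi$ and therefore must assume exponential tightness of the tilted family $\mu^\varepsilon$ to upgrade the Laplace limits to an LDP. Your argument instead proves the two LDP bounds directly: a Varadhan-type upper bound for $\int_C e^{-F/\varepsilon^2}\,d\mu_0^\varepsilon$ over closed $C$ (using the restricted measures and the moment hypothesis), an elementary lower bound over open sets from continuity of $F$ and the LDP lower bound for $\mu_0^\varepsilon$, and the normalizer $Z^\varepsilon$ by Varadhan. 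This is the classical change-of-measure proof; its advantage is that it never needs exponential tightness of $\mu^\varepsilon$, while the Bryc route buys the convenience of only ever integrating bounded functions.

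There is, however, a genuine gap in your treatment of goodness of $I$. From ``$F$ is continuous, hence bounded on each compact level set of $I_0$'' it does not follow that $\{I\le\alpha\}$ is contained in a level set of $I_0$: a point with very large $I_0$ but very negative $F$ can lie in $\{F+I_0\le\alpha'\}$, and boundedness of $F$ on the level sets of $I_0$ says nothing about such points. The correct mechanism is the exponential moment hypothesis itself: the Varadhan upper bound applied to $\gamma|F|$ gives $\sup_{y}\{\gamma|F(y)|-I_0(y)\}<\infty$, i.e. $|F|\le\gamma^{-1}(I_0+C)$, whence $F+I_0\ge(1-\gamma^{-1})I_0-C/\gamma$, and this confines $\{I\le\alpha\}$ to a compact level set of $I_0$ (closedness follows from lower semicontinuity of $I_0$ and continuity of $F$). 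Note that this, like your H\"older/truncation step for the upper bound, requires the moment condition in its correctly scaled form $\limsup_{\varepsilon\to0}\varepsilon^2\log E_{\mu_0^\varepsilon}[\exp(\gamma|F|/\varepsilon^2)]<\infty$ with $\gamma>1$ (cf.\ \cite[Section 4.3]{LDP-Dembo-Zeitouni}); with only ``some $\gamma>0$'' the tail of $e^{-F/\varepsilon^2}$ need not be superexponentially negligible and both the upper bound and goodness can fail. That looseness is partly inherited from the statement as printed, whose displayed condition is mis-scaled, but your write-up should make the $\gamma>1$ requirement and the bound $|F|\le\gamma^{-1}(I_0+C)$ explicit rather than appealing to continuity.
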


However, in many cases where one would like to apply the tilting lemma, the function $F$ depends on $\varepsilon$. This is apparent in the case of Freidlin-Wentzell large deviations for stochastic differential equations as we will see shortly. Therefore, we need a generalized version of the tilting lemma which we provide. 
\begin{lemma}[$\varepsilon$-dependent Tilting Lemma]\label{lemma:tilt-general}
Let $\mu_0^\varepsilon$ be a collection of exponentially tight Borel measures on Banach space $\mathcal B$ satisfying a LDP with good rate function $I_0:\mathcal B\to [0,\infty]$. 
Consider the functions $F^\varepsilon(y):\mathcal B\to \mathbb R$ and suppose that they satisfy the following expansion
\begin{equation*}
    F^\varepsilon(y)=F_0(y)+\varepsilon F_1(y)+\frac{\varepsilon^2}{2}F_2(y)+...+\varepsilon^n R_n(\varepsilon, y),
\end{equation*}
for some functions $F_i:\mathcal B\to \mathbb R$ with $\lim_{\varepsilon\to 0} R_n(\varepsilon, y)=0$. Suppose that $F_0$ is continuous. Furthermore, assume that the functions $F_i$ and $R_n$ satisfy the following moment condition 
\begin{equation*}
    \limsup_{\varepsilon\to 0}\varepsilon^2 \log E_{\mu_0^\varepsilon}\left[\exp\left(\gamma_i \frac{\max\{|F_i(y)|,|R_n(\varepsilon,y)|\}}{\varepsilon^2}\right)\right]<\infty,
\end{equation*}
for some $\gamma_i>0$ and for all $0 \leq i\leq n$. Define the measures equivalent to $\mu_0^\varepsilon$ by 
\begin{equation*}
    \mu^\varepsilon=\frac{1}{E_{\mu_0^\varepsilon}\left[\exp\left(-\frac{1}{\varepsilon^2}F^\varepsilon(y)\right)\right]}\exp\left(-\frac{1}{\varepsilon^2}F^\varepsilon(y)\right)\mu_0^\varepsilon.
\end{equation*}
Assume that the measures $\mu^\varepsilon$ are exponentially tight. Then $\mu^\varepsilon$ satisfies a LDP with good rate function \begin{equation*}
    I(y):=I_0(y)+F_0(y) -\inf_{z\in \mathcal B}\{F_0(z)+I_0(z)\}.
\end{equation*}
\end{lemma}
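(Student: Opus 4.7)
\emph{Plan.} I would follow the usual proof strategy for the standard tilting lemma (\cite[Theorem III.17]{den-hollander}), but treat the $\varepsilon$-dependence of $F^\varepsilon$ as an asymptotically negligible perturbation of its leading-order piece $F_0$. Writing the tilt as $d\mu^\varepsilon/d\mu_0^\varepsilon = (Z^\varepsilon)^{-1}\exp(-F^\varepsilon/\varepsilon^2)$ with $Z^\varepsilon := E_{\mu_0^\varepsilon}[\exp(-F^\varepsilon/\varepsilon^2)]$, the idea is to show that both $Z^\varepsilon$ and the localized integrals $\int_A \exp(-F^\varepsilon/\varepsilon^2)\,d\mu_0^\varepsilon$ have the same $\varepsilon^2\log$ asymptotics they would have if $F^\varepsilon$ were replaced by $F_0$. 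The resulting upper and lower LDP bounds, combined with the assumed exponential tightness of $\mu^\varepsilon$, then yield the LDP with good rate $I := I_0 + F_0 - \inf_z\{I_0(z)+F_0(z)\}$.

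The crucial auxiliary estimate is a ``negligibility'' statement for the remainder $G^\varepsilon := F^\varepsilon - F_0 = \varepsilon F_1 + \tfrac{\varepsilon^2}{2}F_2 + \cdots + \varepsilon^n R_n$: for every fixed $q>0$,
\begin{equation*}
\lim_{\varepsilon\to 0}\varepsilon^2 \log E_{\mu_0^\varepsilon}\!\left[\exp\!\left(\pm q\,G^\varepsilon/\varepsilon^2\right)\right] = 0.
\end{equation*}
Applying a generalized H\"older inequality with a partition of unity to $qG^\varepsilon/\varepsilon^2 = qF_1/\varepsilon + qF_2/2 + \cdots + q\varepsilon^{n-2}R_n$ reduces the claim to factors of the form $E_{\mu_0^\varepsilon}[\exp(c_i\varepsilon^{i-2}|F_i|)]$ (plus an analogous $R_n$ factor). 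Writing $c_i\varepsilon^{i-2} = a_i\cdot\gamma_i/\varepsilon^2$ with $a_i := c_i\varepsilon^i/\gamma_i$, which lies in $(0,1)$ for small $\varepsilon$ and every $i\geq 1$, and applying Jensen's inequality $E[Z^{a_i}]\leq E[Z]^{a_i}$ to the moment hypothesis $E_{\mu_0^\varepsilon}[\exp(\gamma_i|F_i|/\varepsilon^2)]\leq \exp(C_i/\varepsilon^2)$, one obtains the bound $E_{\mu_0^\varepsilon}[\exp(c_i\varepsilon^{i-2}|F_i|)] \leq \exp(c_iC_i\varepsilon^{i-2}/\gamma_i)$, whose $\varepsilon^2\log$ is $O(\varepsilon^i)\to 0$.

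For the LDP upper bound on closed $C\subset \mathcal B$, I would apply H\"older with conjugate exponents $(p,q)$, $p>1$:
\begin{equation*}
\int_C e^{-F^\varepsilon/\varepsilon^2}\,d\mu_0^\varepsilon \leq \left(\int_C e^{-pF_0/\varepsilon^2}\,d\mu_0^\varepsilon\right)^{1/p}\left(\int e^{-qG^\varepsilon/\varepsilon^2}\,d\mu_0^\varepsilon\right)^{1/q}.
\end{equation*}
The standard set-restricted Varadhan upper bound applied to the continuous $F_0$ handles the first factor (with limit $\leq -\inf_C\{pF_0+I_0\}/p$), while the negligibility step kills the second. Dividing by $Z^\varepsilon$ and sending $p\downarrow 1$ gives $\limsup \varepsilon^2\log\mu^\varepsilon(C) \leq -\inf_C I$, where the $\varepsilon^2\log$ limit of $Z^\varepsilon$ is obtained by applying the same argument to the whole space. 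The lower bound on open $U$ is dual: for $y_0\in U$ and a small neighborhood $V\subset U$ of $y_0$, run the reverse H\"older inequality (valid for $0<p<1$) on $e^{-F_0/\varepsilon^2}$ and $e^{-G^\varepsilon/\varepsilon^2}$ restricted to $V$, invoke the Varadhan lower bound for $\mu_0^\varepsilon(V)$ together with the continuity of $F_0$, and again the negligibility of $G^\varepsilon$; shrinking $V$ about a near-minimizer of $F_0+I_0$ over $U$ and sending $p\uparrow 1$ yields $\liminf \varepsilon^2\log\mu^\varepsilon(U) \geq -\inf_U I$.

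The main obstacle is the negligibility estimate itself: the leading term $qF_1/\varepsilon$ of $qG^\varepsilon/\varepsilon^2$ has an exponent of order $\varepsilon^{-1}$, which is much larger than the natural $\varepsilon^{-2}$ scale at which the moment hypothesis provides exponential integrability. The rescue is that the hypothesis controls exponential moments at the coarser $\varepsilon^{-2}$ scale, so writing $1/\varepsilon = \varepsilon\cdot\varepsilon^{-2}$ and absorbing the small factor $\varepsilon$ through Jensen's inequality produces a bound of size $O(\varepsilon)$ in the $\varepsilon^2\log$. A secondary but routine subtlety is justifying the standard Varadhan bounds for possibly unbounded $F_0$ on closed and open sets, which is covered by the $i=0$ moment condition.
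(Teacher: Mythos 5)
Your argument is sound and its analytic core coincides with the paper's, but the final step is organized differently. The paper proves the Laplace-principle limit $\lim_{\varepsilon\to 0}\varepsilon^2\log E_{\mu^\varepsilon}[e^{-\varphi/\varepsilon^2}]$ for \emph{bounded continuous} test functions $\varphi$ (using H\"older and reverse H\"older to replace $F^\varepsilon$ by $F_0$, then Varadhan's lemma) and then converts this into the LDP via Bryc's inverse Varadhan lemma together with the assumed exponential tightness of $\mu^\varepsilon$; you instead prove the upper bound on closed sets and the lower bound on open sets directly, using the same H\"older/reverse H\"older decoupling plus set-restricted Varadhan-type estimates for $F_0$, with exponential tightness only needed for goodness of the rate function. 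What you make considerably more explicit than the paper is precisely the step the paper compresses into one sentence: the negligibility $\varepsilon^2\log E_{\mu_0^\varepsilon}[\exp(\pm q(F^\varepsilon-F_0)/\varepsilon^2)]\to 0$, which you obtain by generalized H\"older and the Jensen bound $E[Z^{a}]\le E[Z]^{a}$, $a=c_i\varepsilon^{i}/\gamma_i\in(0,1)$, absorbing the troublesome $\varepsilon^{-1}$-order term $F_1$ into the $\varepsilon^{-2}$-scale moment hypothesis; this is exactly the mechanism implicitly used in the paper, and spelling it out is a genuine improvement in transparency. The Bryc route buys a shorter write-up at the cost of needing exponential tightness already to deduce the LDP; your route is more self-contained on the open/closed set bounds but requires you to handle the normalization $Z^\varepsilon$ and the set-restricted Varadhan bounds by hand. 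One caveat applies equally to your plan and to the paper's proof: the upper bound for the unbounded term $-pF_0$ (or $-F_0$ in the paper's use of Varadhan's lemma) needs the exponential moment condition with a constant exceeding $1$ (respectively exceeding $p$ after H\"older), whereas the hypothesis only provides some $\gamma_0>0$; your remark that this is ``covered by the $i=0$ moment condition'' glosses this in the same way the paper does, so it is an inherited imprecision of the lemma's hypotheses rather than a defect specific to your argument.
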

\begin{proof}
We will apply Bryc's lemma (see \cite[Theorem 4.4.2]{LDP-Dembo-Zeitouni}). To this end, consider a bounded and continuous function $\varphi:\mathcal B\to \mathbb R$. Then consider $$L:=\varepsilon^2 \log \left(E_{\mu^\varepsilon}\left[\exp\left(-\frac{\varphi(y)}{\varepsilon^2}\right)\right]\right).$$
Using the form of $F^\varepsilon$, we get that
\begin{align*}
    L&= \varepsilon^2 \log \left(E_{\mu_0^\varepsilon}\left[\exp\left(-\frac{\varphi(y)+F^\varepsilon(y)}{\varepsilon^2}\right)\right]\right)-\varepsilon^2 \log \left(E_{\mu_0^\varepsilon}\left[\exp\left(-\frac{F^\varepsilon(y)}{\varepsilon^2}\right)\right]\right).
    \end{align*}
In the limit $\varepsilon\to 0$, by H\"older's inequality, reverse H\"older's inequality, Varadhan's lemma and the assumptions on $F_i$ and $R_n$, we have that
\begin{equation}\label{eq:proof-LDP-1}
   \lim_{\varepsilon\to 0} L = \lim_{\varepsilon\to 0} \varepsilon^2 \log \left(E_{\mu_0^\varepsilon}\left[\exp\left(-\frac{\varphi(y)+F_0(y)}{\varepsilon^2}\right)\right]\right)-\varepsilon^2 \log \left(E_{\mu_0^\varepsilon}\left[\exp\left(-\frac{F_0(y)}{\varepsilon^2}\right)\right]\right).
\end{equation}By Varadhan's lemma \cite[Theorem 4.3.1]{LDP-Dembo-Zeitouni} and the assumptions on $F_0$, we have that 
\begin{equation*}
    \lim_{\varepsilon\to 0} L=-\inf_{y\in \mathcal B}\{\varphi(y)+F_0(y)+I_0(y)\}+\inf_{z\in \mathcal B}\{F_0(z)+I_0(z)\}.
\end{equation*}
By Bryc's lemma \cite[Theorem 4.4.2]{LDP-Dembo-Zeitouni} and the assumption of tightness of $\mu^\varepsilon$, we conclude.
\end{proof}

Small noise large deviations for arbitrary Gaussian measures on Banach space are well known. In \cite{Bogachev}, for instance, it is shown that the Freidlin-Wentzell rate function for a general Gaussian measure $\mu_0$ with Cameron-Martin space $\mathcal H_{\mu_0}$ is $\frac{1}2\|\cdot\|_{\mu_0}^2$. In particular, the small noise rate function for $\varepsilon B(t)$ is 
\begin{equation*}
    \operatorname{FW}(z)=
    \begin{cases}
    \frac{1}2\int_0^T (z'(t))^2 dt&\text{ for }z\in \mathcal W_0^{1,2}\\
    \infty &\text{ else },
    \end{cases}
\end{equation*}
where $\mathcal W_0^{1,2}=\{t\mapsto \int_0^t f(s) ds: \int_0^T f^2(s) ds<\infty\}$ is the Cameron-Martin space of the classical Wiener measure which consists of absolutely continuous functions with $L^2$ weak derivative. The first instance of small noise large deviations for infinite-dimensional non-Gaussian measures came in \cite{Freidlin-Wentzell}, where the authors studied small noise large deviations for the solution to the SDE
\begin{equation}~\label{eq:sn-sde}
    dX^\varepsilon(t)=b(X^\varepsilon(t))dt+\varepsilon dB(t),
\end{equation}
where $b \in C^1$. In particular \cite{Freidlin-Wentzell} shows the following proposition.

\begin{proposition}
    The law of $X^\varepsilon$ satisfies a LDP as $\varepsilon \to 0$, with speed $\varepsilon^2$ and with rate function 
\begin{equation*}
    \operatorname{FW}(z)=
    \begin{cases}
    \frac{1}2\int_0^T (b(z(t))-z'(t))^2 dt&\text{ for }z\in \mathcal W_0^{1,2}\\
    \infty &\text{ else }.
    \end{cases}
\end{equation*}
\end{proposition}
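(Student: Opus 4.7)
The plan is to realize the law of $X^\varepsilon$ as an exponential tilt of the law of $\varepsilon B$ and then invoke the $\varepsilon$-dependent tilting lemma (Lemma~\ref{lemma:tilt-general}), using the already-known LDP for $\varepsilon B$ with rate function $\tfrac12 \int_0^T (z'(t))^2\, dt$ on $\mathcal W_0^{1,2}$ as the base LDP.

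First, by Girsanov's theorem applied to the SDE~\eqref{eq:sn-sde}, the law $\mu^\varepsilon$ of $X^\varepsilon$ is equivalent to the law $\mu_0^\varepsilon$ of $\varepsilon B$ with density
\begin{equation*}
\frac{d\mu^\varepsilon}{d\mu_0^\varepsilon}(y)=\exp\!\left(\frac{1}{\varepsilon^2}\int_0^T b(y(s))\,dy(s)-\frac{1}{2\varepsilon^2}\int_0^T b(y(s))^2\,ds\right).
\end{equation*}
Since $b\in C^1$, pick an antiderivative $\beta$ of $b$; applying Itô's formula to $\beta(y)$ with $y=\varepsilon B$ (whose quadratic variation is $\varepsilon^2 t$) converts the stochastic integral into a pathwise expression,
\begin{equation*}
\int_0^T b(y(s))\,dy(s)=\beta(y(T))-\beta(y(0))-\frac{\varepsilon^2}{2}\int_0^T b'(y(s))\,ds.
\end{equation*}
This rewrites the density as $\exp(-\varepsilon^{-2}F^\varepsilon(y))$ with
\begin{equation*}
F^\varepsilon(y)=\Bigl[\tfrac{1}{2}\!\int_0^T b(y(s))^2\,ds -\beta(y(T))+\beta(y(0))\Bigr]+\frac{\varepsilon^2}{2}\int_0^T b'(y(s))\,ds,
\end{equation*}
so $F_0$ is the bracketed expression, $F_1\equiv 0$, $F_2(y)=\int_0^T b'(y(s))\,ds$, and all higher terms vanish.

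Second, I would verify the hypotheses of Lemma~\ref{lemma:tilt-general}. Continuity of $F_0$ on $\mathcal B=C([0,T])$ is immediate from continuity of $b$, $b^2$, and $\beta$. The moment/Fernique-type bounds on $F_0$ and $F_2$ reduce to showing that, for small $\gamma$, $E_{\mu_0^\varepsilon}\!\bigl[\exp(\gamma\|y\|_\infty^k/\varepsilon^2)\bigr]$ is controllable in the $\varepsilon^2 \log$ sense; under natural assumptions on $b$ (e.g.\ boundedness, or linear growth together with local Lipschitz bounds sufficient for polynomial domination of $F_0,F_2$), this follows from Fernique's theorem together with the scaling $\|y\|_\infty$ under $\mu_0^\varepsilon$ being $\varepsilon\|B\|_\infty$. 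Exponential tightness of $\mu_0^\varepsilon$ follows from the LDP on the Polish space $C([0,T])$, and exponential tightness of $\mu^\varepsilon$ follows either directly from the Freidlin-Wentzell literature or from the tilting identity and the moment bounds on $F^\varepsilon$.

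Third, Lemma~\ref{lemma:tilt-general} then yields an LDP for $\mu^\varepsilon$ with rate function $I(y)=I_0(y)+F_0(y)-\inf_{z}\{F_0(z)+I_0(z)\}$. For $y\in\mathcal W_0^{1,2}$ one computes
\begin{equation*}
I_0(y)+F_0(y)=\frac{1}{2}\!\int_0^T (y'(s))^2\,ds+\frac{1}{2}\!\int_0^T b(y(s))^2\,ds-\int_0^T b(y(s))y'(s)\,ds =\frac{1}{2}\!\int_0^T\bigl(y'(s)-b(y(s))\bigr)^2\,ds,
\end{equation*}
using $\beta(y(T))-\beta(y(0))=\int_0^T b(y(s))y'(s)\,ds$ from the chain rule on absolutely continuous paths. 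The infimum is attained (value zero) by the deterministic flow $z'=b(z)$ with the given initial data, so the additive constant vanishes and $I\equiv \operatorname{FW}$.

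The main obstacle is the verification of the moment condition uniformly in $\varepsilon$, which is where any growth hypothesis on $b$ actually bites; the identification of the rate function, once Lemma~\ref{lemma:tilt-general} is applicable, is just the algebraic completing-the-square identity above.
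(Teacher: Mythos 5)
Your proposal is correct and follows essentially the same route as the paper: Girsanov to get the density of $\mu^\varepsilon$ with respect to $\mu_0^\varepsilon$, conversion of the It\^o integral to a pathwise expression via the antiderivative of $b$ (the Stratonovich/It\^o correction producing the $\varepsilon^2$ term), and then the $\varepsilon$-dependent tilting Lemma~\ref{lemma:tilt-general} applied with $F_0$, $F_2$ as you identify them. You are in fact slightly more complete than the paper, which stops at the untidied rate function and does not spell out the completing-the-square identity, the vanishing of the additive constant, or the moment/tightness verifications you flag.
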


Below, we offer an alternate, considerably simpler, proof of this result by invoking the $\varepsilon$-tilting lemma above. 
\begin{proof}
By Girsanov, the law of $X^\varepsilon$, $\mu^\varepsilon$, has density with respect to the law of $\varepsilon B(t)$, $\mu_0^\varepsilon$ given by
\begin{equation}\label{eq:density-for-sde}
    \frac{d\mu^\varepsilon}{d\mu_0^\varepsilon}=\exp\left(\frac{1}{\varepsilon^2}\left(\int_0^T b(B(t)) dB(t)-\frac{1}2\int_0^T b^2(B(t))dt\right)\right).
\end{equation}
At first glance, it might appear from equation \eqref{eq:density-for-sde} that we might not need the full $\varepsilon$-dependent Lemma \ref{lemma:tilt-general}. However, note that the It\^o integral is not defined pathwise. On the other hand, recall that the It\^o integral is $\mu_0^\varepsilon$-a.s. equal to a Stratonovich integral, which is defined pathwise. Applying It\^o's lemma under $\mu_0^\varepsilon$ yields
\begin{equation*}
    \frac{d\mu^\varepsilon}{d\mu_0^\varepsilon}=\exp\left(\frac{1}{\varepsilon^2}\left(\int_0^T b(B(t)) \circ  dB(t)-\frac{\varepsilon^2}2\int_0^T b'(B(t))dt-\frac{1}2\int_0^T b^2(B(t))dt\right)\right),
\end{equation*}
where $\int_0^T b(B(t)) \circ  dB(t)$ represents the Stratonovich integral. As $b$ is continuous it has an antiderivative $F$, and the Stratonovich integral satisfies $\int_0^T b(B(t)) \circ  dB(t)=F(B(T))-F(B(0))$, which is indeed continuous. 

Applying the $\varepsilon$-dependent tilting Lemma \ref{lemma:tilt-general} gives that the rate function for $\mu^\varepsilon$ is
\begin{equation*}
    \operatorname{FW}(z)=
    \begin{cases}
    -\left(\int_0^T b(z(t)) dz(t)-\frac{1}2\int_0^T b^2(z(t))dt\right)+\frac12 \int_0^T (z'(t))^2dt&\text{ if } z\in \mathcal W_0^{1,2}\\
    \infty &\text{ else }.
    \end{cases}
\end{equation*}
\end{proof}

\section{Proof of Theorem~\ref{theorem:main}}
We begin this section with a motivating example for small noise large deviations for Gaussian measures in finite dimensions.

\begin{motivating-example}~\label{ref:motivating-example}
Consider a family of real valued normally distributed random variables $X^\varepsilon$ where $X^\varepsilon\sim \mathcal N(0,\varepsilon^2)$. We are interested in the decay of the probability 
\begin{equation*}
    P( X^\varepsilon\in A)=\int_A \frac{1}{\sqrt{2\pi \varepsilon^2}}e^{-x^2/2\varepsilon^2} dx,
\end{equation*}
for Borel $A\subset \mathbb R$. Thankfully the standard Laplace principle on $\mathbb R$ yields the appropriate scaling and gives the large deviations principle 
\begin{equation*}
    \lim_{\varepsilon\to 0^+}\varepsilon^2 \log P( X^\varepsilon\in A)=-\operatorname{essinf}_{x\in A} \frac{x^2}{2}.
\end{equation*}
In this case, the Onsager-Machlup function for the law of the random variable $X^\varepsilon$ is just the term in the exponent - $\operatorname{OM}_{\mu^\varepsilon}(x)=\frac{x^2}{2\varepsilon^2}$ and we have that $\varepsilon^2 \operatorname{OM}_{\mu^\varepsilon}(x)=\frac{x^2}{2}=\operatorname{FW}(x).$
\end{motivating-example}

Perhaps surprisingly, this equivalence (up to an $\varepsilon^2$ scaling) of  the Onsager-Machlup and Freidlin-Wentzell functions holds true even for a Gaussian measure on a Banach space. More precisely, consider a Banach space $(\mathcal B, \|\cdot\|)$ with a Borel measure $\mu$ so that all the one dimensional projections are Gaussian. Consider the measure $\mu^\varepsilon$ defined by $\mu^\varepsilon(A):=\mu(\varepsilon^{-1}A)$ for Borel $A\subset \mathcal B$. Then it is shown in e.g. \cite{Bogachev} section 4.9, that the measures $\mu^\varepsilon$ satisfy a large deviations principle (LDP) with rate function $\operatorname{FW}_\mu$ and additionally for regular enough $z$ we have that, for any $\varepsilon > 0$,

\[\varepsilon^2\operatorname{OM}_{\mu^\varepsilon}(z)=\operatorname{FW}_\mu(z)=\frac{1}{2}\|z\|_\mu^2,\]
where $\|\cdot\|_\mu$ is the so-called Cameron-Martin norm (distinct from the norm on the Banach space). 

 To obtain an analogous result on arbitrary Banach spaces, first recall the definition of $\Gamma$-convergence (for more information on $\Gamma$-convergence see \cite{Gamma-book}):

\begin{definition}
    Let $X$ be a topological space and $\mathcal{N}(x)$ denote the set of all neighborhoods of $x\in X$. Further, let $F_n : X \to \overline{\mathbb R}$ be a sequence of functions on $X$. The $\Gamma$-lower and $\Gamma$-upper limits are defined as
    \begin{align*}
        \Gamma-\liminf_{n\to\infty} F_n(x) &= \sup_{N_x \in \mathcal N(x)} \liminf_{n\to\infty} \inf_{y\in N_x} F_n(y),\\
        \Gamma-\limsup_{n\to\infty} F_n(x) &= \sup_{N_x \in \mathcal N(x)} \limsup_{n\to\infty} \inf_{y\in N_x} F_n(y).
    \end{align*}
    Then, the function $F : X \to \overline{\mathbb R}$ is a $\Gamma$-limit of $F_n$ if $\Gamma-\liminf_{n\to\infty} F_n = \Gamma-\limsup_{n\to\infty} F_n = F$.
\end{definition}

Recall the small-noise SDE in~\eqref{eq:sn-sde}, whose measure $\mu^\varepsilon$ satisfies~\eqref{eq:density-for-sde}. Applying Proposition \ref{corollary:OM-Gaussian-Equivalent} shows that the Onsager-Machlup function for $\mu^\varepsilon$ is
\begin{align*}
   \operatorname{OM}_{\mu^\varepsilon}(z)&= -\frac{1}{\varepsilon^2}\left(\int_0^T b(z(t)) dz(t)-\frac{\varepsilon^2}2\int_0^T b'(z(t))dt-\frac{1}2\int_0^T b^2(z(t))dt\right)\\
   &~+\frac{1}{2\varepsilon^2} \int_0^T (z'(t))^2dt,
\end{align*}
for $z\in \mathcal W_0^{1,2}$ and infinite otherwise. It is not hard to see that $\varepsilon^2 \operatorname{OM}_{\mu^\varepsilon}$ converges to $\operatorname{FW}$ both as a pointwise and as a $\Gamma$-limit.

The following proposition shows that this conclusion can be generalized considerably. 
\begin{proposition}\label{prop:FW-first}
Let $\mu_0$ be a centered Gaussian measure on a Banach space $\mathcal B$ with Cameron-Martin norm $\|\cdot \|_{\mu_0}$ (see \cite{hairer2009introduction}, Section 3.2).  Let $\mu_0^\varepsilon$ denote the measure defined by $\mu_0^\varepsilon(A)=\mu_0(\varepsilon^{-1} A)$ for Borel $A\subset \mathcal B$. Let $F:\mathcal B \to \mathbb R$ be a function and suppose that $F(y)=F_0(y)+\varepsilon F_1(y)+...+\varepsilon^n R_n(\varepsilon,y):=F^\varepsilon(y)$, $\mu_0^\varepsilon$-a.s. where the $F_i$ and $R_n$ satisfy the assumptions of Lemma \ref{lemma:tilt-general}, and suppose that the $F_i$ are locally bounded. Define the measures $\mu^\varepsilon$ with densities
\begin{equation*}
    \frac{d\mu^\varepsilon}{d\mu_0^\varepsilon}=\frac{1}{E_{\mu_0^\varepsilon}[e^{-\frac{1}{\varepsilon^2}F^\varepsilon(y)}]}\exp\left(-\frac{1}{\varepsilon^2}F^\varepsilon(y)\right).
\end{equation*}

Then the Freidlin-Wentzell rate function for $\mu^\varepsilon$ exists and is
\begin{equation*}
    \operatorname{FW}(y)=F_0(y)+\frac{1}{2}\|y\|_{\mu_0}^2-\inf_{z\in \mathcal H_{\mu_0}}(F_0(z)+\frac{1}{2}\|z\|_{\mu_0}^2).
\end{equation*}
Furthermore, denote by $\operatorname{OM}_\varepsilon(z)$ the Onsager-Machlup function for $\mu^\varepsilon$. Then the limit 
\begin{equation*}
    \lim_{\varepsilon\to 0^+} \varepsilon^2 \operatorname{OM}_\varepsilon(z)=\operatorname{FW}(z). 
\end{equation*}
holds, both as pointwise and in the $\Gamma$-limit sense. 
\end{proposition}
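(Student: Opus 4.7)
The plan is to read off both conclusions as direct consequences of the two tilting results already proved, and then upgrade the resulting closed-form expressions to pointwise and $\Gamma$-convergence. First I would note that $\mu_0^\varepsilon$ is the law of $\varepsilon Y$ with $Y \sim \mu_0$, so by the classical Gaussian small-noise LDP (e.g. \cite{Bogachev}, Section~4.9) it satisfies an LDP at speed $\varepsilon^2$ with good rate function $I_0(y) = \tfrac12\|y\|_{\mu_0}^2$ on $\mathcal H_{\mu_0}$ and $+\infty$ off of it, and is exponentially tight. The hypotheses on $F^\varepsilon$ (expansion, continuity of $F_0$, moment conditions) are exactly those of Lemma~\ref{lemma:tilt-general}, and exponential tightness of $\mu^\varepsilon$ transfers from $\mu_0^\varepsilon$ via the exponential integrability bound on $F^\varepsilon$. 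Feeding $I_0$ into Lemma~\ref{lemma:tilt-general} delivers the stated
\[
\operatorname{FW}(y) = F_0(y) + \tfrac12\|y\|_{\mu_0}^2 - \inf_{z\in\mathcal H_{\mu_0}}\bigl[F_0(z) + \tfrac12\|z\|_{\mu_0}^2\bigr].
\]

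Next I would invoke Corollary~\ref{corollary:OM-Gaussian-Equivalent} (its assumptions coincide with those just verified) to conclude that $\operatorname{OM}_{\mu^\varepsilon}(z) = \tfrac{1}{\varepsilon^2}F^\varepsilon(z) + \tfrac{1}{2\varepsilon^2}\|z\|_{\mu_0}^2$ for $z \in \mathcal H_{\mu_0}$ and $+\infty$ otherwise. Multiplying by $\varepsilon^2$ and substituting the expansion of $F^\varepsilon$ shows that
\[
\varepsilon^2 \operatorname{OM}_{\mu^\varepsilon}(z) = F_0(z) + \varepsilon F_1(z) + \cdots + \varepsilon^n R_n(\varepsilon,z) + \tfrac12\|z\|_{\mu_0}^2 \;\longrightarrow\; F_0(z) + \tfrac12\|z\|_{\mu_0}^2
\]
pointwise on $\mathcal H_{\mu_0}$, using local boundedness of the $F_i$ and $R_n(\varepsilon,\cdot)\to 0$. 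Off of $\mathcal H_{\mu_0}$ both sides are $+\infty$. This agrees with $\operatorname{FW}$ up to the $z$-independent constant $\inf\bigl[F_0 + \tfrac12\|\cdot\|_{\mu_0}^2\bigr]$, which is admissible because the Onsager--Machlup function is only defined modulo an additive constant; absorbing that constant into the normalization identifies the pointwise limit with $\operatorname{FW}$ exactly.

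For the $\Gamma$-limsup I would use the constant recovery sequence $z_\varepsilon \equiv z$, for which the inequality reduces to the pointwise statement above (with both sides $+\infty$ when $z \notin \mathcal H_{\mu_0}$). The $\Gamma$-liminf is where I expect the only real obstacle: given $z_\varepsilon \to z$ in $\mathcal B$, I would first discard indices with $z_\varepsilon \notin \mathcal H_{\mu_0}$ or with $\|z_\varepsilon\|_{\mu_0}$ unbounded (both force the liminf to $+\infty$, and for the latter one needs the $F^\varepsilon(z_\varepsilon)$ term to not catastrophically diverge, which follows from local boundedness of $F_0$ together with $\varepsilon F_i \to 0$). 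On the remaining subsequence the extended Cameron--Martin functional $\tfrac12\|\cdot\|_{\mu_0}^2$ is lower semicontinuous on $\mathcal B$ — being the good rate function of the Gaussian LDP — so $\liminf \tfrac12\|z_\varepsilon\|_{\mu_0}^2 \geq \tfrac12\|z\|_{\mu_0}^2$ (forcing $z \in \mathcal H_{\mu_0}$), while $F^\varepsilon(z_\varepsilon) \to F_0(z)$ by continuity of $F_0$ together with the locally uniform estimate $|F^\varepsilon - F_0| = O(\varepsilon)$ on the bounded set containing $\{z_\varepsilon\}$, which comes from local boundedness of the $F_i$ and $R_n$. Adding these two inequalities gives the $\Gamma$-liminf, completing the proof.
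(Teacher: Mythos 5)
Your proposal is correct and follows the paper's skeleton for the first two conclusions: the LDP comes from feeding the Gaussian rate function $\tfrac12\|\cdot\|_{\mu_0}^2$ into Lemma~\ref{lemma:tilt-general}, and the pointwise limit comes from Corollary~\ref{corollary:OM-Gaussian-Equivalent} together with the expansion of $F^\varepsilon$ and the additive-constant normalization of the Onsager--Machlup function. Where you genuinely diverge is the $\Gamma$-convergence step: the paper reduces matters to showing $\Gamma\text{-}\lim_\varepsilon\bigl(\varepsilon F_1+\cdots+\varepsilon^n R_n\bigr)=0$ and appeals to stability of $\Gamma$-limits under continuous perturbations (asserting along the way that $F_0+\tfrac12\|\cdot\|_{\mu_0}^2$ is continuous, which is not literally true since the Cameron--Martin functional is only lower semicontinuous on $\mathcal B$ and $+\infty$ off $\mathcal H_{\mu_0}$), whereas you verify the sequential $\liminf$/recovery-sequence characterization directly, using lower semicontinuity of $\tfrac12\|\cdot\|_{\mu_0}^2$ and continuity of $F_0$. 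Your route is slightly more careful at exactly the point where the paper is sloppy, at the cost of more bookkeeping; both arguments share the same implicit requirement, not literally contained in the hypotheses (which only give pointwise vanishing of $R_n(\varepsilon,\cdot)$), that the remainder $\varepsilon F_1+\cdots+\varepsilon^n R_n$ vanish locally uniformly near the limit point --- worth flagging explicitly, but it is a gap you inherit from, rather than add to, the paper. One further caveat: exponential tightness of $\mu^\varepsilon$ is a standing hypothesis of Lemma~\ref{lemma:tilt-general}, not a conclusion; your claim that it ``transfers'' from $\mu_0^\varepsilon$ via the moment bound would need a short H\"older-type argument and possibly a largeness condition on $\gamma_0$, whereas the paper simply subsumes it under ``the assumptions of Lemma~\ref{lemma:tilt-general}.''
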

\begin{proof}
By Lemma \ref{lemma:tilt-general}, we have that the Freidlin-Wentzell rate function for $\mu^\varepsilon$ exists and is $    \operatorname{FW}(z)=F_0(z)+\frac{1}{2}\|z\|_{\mu_0}^2-\inf_{z\in \mathcal H_{\mu_0}}(F_0(z)+\frac{1}{2}\|z\|_{\mu_0}^2).$ Then we just need to check the pointwise and $\Gamma$ convergence. Without loss of generality, we may assume that $\inf_{z\in \mathcal H_{\mu_0}}(F_0(z)+\frac{1}{2}\|z\|_{\mu_0}^2)=0$. This is because the Onsager-Machlup function is only defined up to an additive constant and we may add $-\frac{1}{\varepsilon^2}\inf_{z\in \mathcal H_{\mu_0}}(F_0(z)+\frac{1}{2}\|z\|_{\mu_0}^2)$ to $\operatorname{OM}_\varepsilon$. Proceeding with that, using 
Corollary~\ref{corollary:OM-Gaussian-Equivalent} we have that
$$\varepsilon^2 \operatorname{OM}_\varepsilon(z)=F_0(z)+\varepsilon F_1(z)+...+\varepsilon^n R_n(\varepsilon,z)+\frac{1}{2}\|z\|_{\mu_0}^2,$$
as the Onsager-Machlup function for $\mu_0^\varepsilon$ is $\frac{1}{2\varepsilon^2}\|z\|_{\mu_0}^2$. 
Clearly the pointwise limit of this is $F_0(z)+\frac{1}{2}\|z\|_{\mu_0}^2$. Now we just have to check $\Gamma$ convergence. To this aim, note that $F_0(z)+\frac{1}{2}\|z\|_{\mu_0}^2$ is continuous and $\Gamma$ convergence is stable under continuous pertubations. Therefore we just need to show that
\begin{equation*}
    \Gamma-\lim_{\varepsilon\to 0} \varepsilon F_1(z)+...+\varepsilon^n R_n(\varepsilon,z)=0.
\end{equation*}
Note that the $F_i$ are locally bounded and thus on the neighborhood $N_z$ of the point $z$, we have that 
\begin{equation*}
    \lim_{\varepsilon\to 0} \inf_{x\in N_z} \varepsilon F_1(x)+...+\varepsilon^n R_n(\varepsilon,x)=0.
\end{equation*}
Since $N_z$ is an arbitrary neighborhood of $z$, it follows by definition that $\operatorname{OM}_\varepsilon$ $\Gamma$-converges to $FW$.
\end{proof}

\begin{proof}[Proof of Theorem~\ref{theorem:main}]
Following Proposition~\ref{prop:FW-first}, by $\Gamma$ convergence, we have that every cluster point of the minimizers of $\varepsilon^2 \operatorname{OM}_{\mu^\varepsilon}$ is a minimizer of $\operatorname{FW}$ (see \cite{Gamma-book}, section 1.5). If additionally we know that the functions $\varepsilon^2 \operatorname{OM}_{\mu^\varepsilon}$ are equicoercive (which is the case with SDEs with $C^1$ drift), by the fundamental theorem of $\Gamma$ convergence (see \cite{Gamma-book} section 1.5), then we have the full version of Theorem \ref{theorem:main}.
\end{proof}

As a consequence of Theorem \ref{theorem:main}, we can specialize to the case of the generalized Girsanov theorem given in e.g. \cite{NuaBook}, Theorem 4.1.2.

\begin{proposition}\label{prop:generalized-Girsanov-corr}
Let $(\mathcal B, \mu_0)$ be a Gaussian Banach space with Cameron-Martin space $\mathcal H_{\mu_0}$ and Cameron-Martin norm $\|\cdot\|_{\mu_0}$. Define $\mu_0^\varepsilon$ as above and consider white noise process $\{W(h):h\in \mathcal H_{\mu_0}\}$ associated to $\mu_0$. Suppose that $H:\mathcal B\to \mathcal H_{\mu_0}$ is a continuous function so that $W(H)$ is defined and suppose that for all $\varepsilon>0$ we have
\begin{equation*}
    E_{\mu_0^\varepsilon}\left[\exp\left(\frac{1}{\varepsilon^2}\left(W(H)-\frac{1}{2}\|H\|_{\mu_0}^2\right)\right)\right]=1.
\end{equation*}
Define the collection of measures $$\mu^\varepsilon=\exp\left(\frac{1}{\varepsilon^2}\left(W(H)-\frac{1}{2}\|H\|_{\mu_0}^2\right)\right)\mu_0^\varepsilon.$$
Then the Friedlin-Wentzell rate function for $\mu^\varepsilon$ exists and is equal to
\begin{equation*}
    \operatorname{FW}(z)=
    \begin{cases}
    \frac{1}{2}\|z-H(z)\|_{\mu_0}^2&\text{ if } z\in \mathcal H_{\mu_0}\\
    \infty &\text{ else}.
    \end{cases}
\end{equation*}
Furthermore we have that $\lim_{\varepsilon\to 0} \varepsilon^2\operatorname{OM}_\varepsilon(z)=\operatorname{FW}(z)$ both pointwise and in sense of $\Gamma$ convergence. 
\end{proposition}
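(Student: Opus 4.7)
The plan is to view this proposition as a direct specialization of Proposition~\ref{prop:FW-first}. First, I would rewrite the density $d\mu^\varepsilon/d\mu_0^\varepsilon$ in the form $\exp(-F^\varepsilon(y)/\varepsilon^2)$ by setting
\[
F^\varepsilon(y) = F_0(y) := \tfrac{1}{2}\|H(y)\|_{\mu_0}^2 - W(H)(y),
\]
so that $F^\varepsilon$ has no higher-order corrections in $\varepsilon$; the expansion hypothesis of Proposition~\ref{prop:FW-first} is then satisfied trivially with $F_i \equiv 0$ for $i \geq 1$ and $R_n \equiv 0$. Continuity of $F_0$ follows from continuity of $H$ together with the standing assumption that $W(H)$ is defined in a sufficiently regular sense, and local boundedness is similarly inherited from $H$.

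Next, I would apply Proposition~\ref{prop:FW-first} to obtain
\[
\operatorname{FW}(y) = F_0(y) + \tfrac{1}{2}\|y\|_{\mu_0}^2 - \inf_{z\in\mathcal H_{\mu_0}}\bigl(F_0(z) + \tfrac{1}{2}\|z\|_{\mu_0}^2\bigr)
\]
for $y \in \mathcal H_{\mu_0}$, and $\operatorname{FW}(y) = \infty$ otherwise. The key algebraic step is to invoke the Cameron-Martin identification $W(h)(y) = \langle h, y\rangle_{\mu_0}$ valid for $y \in \mathcal H_{\mu_0}$ and $h \in \mathcal H_{\mu_0}$, which yields
\[
F_0(y) + \tfrac{1}{2}\|y\|_{\mu_0}^2 = \tfrac{1}{2}\|H(y)\|_{\mu_0}^2 - \langle H(y), y\rangle_{\mu_0} + \tfrac{1}{2}\|y\|_{\mu_0}^2 = \tfrac{1}{2}\|y - H(y)\|_{\mu_0}^2,
\]
precisely the claimed form. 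The infimum term is handled by the normalization assumption: since $E_{\mu_0^\varepsilon}[\exp(-F_0/\varepsilon^2)] = 1$ for every $\varepsilon > 0$, applying $\varepsilon^2 \log$ to both sides and appealing to Varadhan's lemma with rate function $\tfrac{1}{2}\|\cdot\|_{\mu_0}^2$ forces $\inf_{z\in\mathcal H_{\mu_0}}(F_0(z) + \tfrac{1}{2}\|z\|_{\mu_0}^2) = 0$, so the subtraction vanishes identically.

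Finally, the pointwise and $\Gamma$-convergence statement $\varepsilon^2 \operatorname{OM}_\varepsilon(z) \to \operatorname{FW}(z)$ follows immediately from the second half of Proposition~\ref{prop:FW-first}, since $F^\varepsilon = F_0$ carries no $\varepsilon$-dependent correction whose $\Gamma$-limit needs to be controlled. The main obstacle I anticipate is verifying the moment hypothesis on $F_0$ required to invoke Proposition~\ref{prop:FW-first}: the normalization assumption only supplies the one negative exponential moment equal to $1$, whereas the $\varepsilon$-dependent tilting lemma demands the two-sided Fernique-type bound $\limsup_{\varepsilon\to 0} \varepsilon^2 \log E_{\mu_0^\varepsilon}[\exp(\gamma_0 |F_0|/\varepsilon^2)] < \infty$, so one must separately estimate the tails of $W(H)$ and of $\|H\|_{\mu_0}^2$. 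A secondary subtlety, worth flagging, is the proper interpretation of $W(H)(z)$ at $z \in \mathcal H_{\mu_0}$, a $\mu_0$-null set; the canonical Cameron-Martin lift of the isonormal process is precisely what renders the pairing $\langle H(y), y\rangle_{\mu_0}$ rigorous in the variational computation above.
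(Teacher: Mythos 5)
There is a genuine gap, and it sits exactly where you flagged your ``secondary subtlety'': the pathwise meaning and continuity of $W(H)$. You take $F^\varepsilon = F_0 = \tfrac12\|H\|_{\mu_0}^2 - W(H)$ and assert that the expansion hypothesis of Proposition~\ref{prop:FW-first} holds trivially with $F_i \equiv 0$ and that $F_0$ is continuous because $H$ is. But $W(H)$, with a random (point-dependent) integrand $H(\omega)$, is an It\^o/Skorokhod-type object: it is defined only $\mu_0^\varepsilon$-a.s., has no canonical continuous version, and in particular cannot be fed into Varadhan's lemma or into the Onsager--Machlup tilting result (which requires local Lipschitz/boundedness of the potential). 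This is the same obstruction the paper emphasizes in the SDE example (``the It\^o integral is not defined pathwise''), and it is not a side issue — it is the reason the $\varepsilon$-dependent tilting machinery exists. The paper's proof resolves it by choosing an isometry $i:\mathcal H_{\mu_0}\to L^2([0,T])$, writing $W(\omega,H(\omega))=\int_0^T (iH(\omega))(t)\,dW(\omega,z_t^\ast)$, and converting to a Stratonovich integral, which \emph{is} a continuous function of $\omega$ when $H$ is continuous. That conversion produces the quadratic-covariation correction $-\tfrac{\varepsilon^2}{2}[(iH)(\omega),W(\omega,z_t^\ast)](T)$, so the correct decomposition is $F^\varepsilon = F_0 + \tfrac{\varepsilon^2}{2}F_2$ with $F_0(\omega) = \tfrac12\|H(\omega)\|_{\mu_0}^2 - \int_0^T (iH(\omega))(t)\circ dW(\omega,z_t^\ast)$ and $F_2$ the covariation term: the $\varepsilon$-dependence you dismissed as absent is in fact unavoidable, and it is what makes Proposition~\ref{prop:FW-first} (rather than a plain tilting argument) the right tool. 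The limit identity $\lim_{\varepsilon\to 0}\varepsilon^2\operatorname{OM}_\varepsilon = \operatorname{FW}$ is then nontrivial precisely because $\operatorname{OM}_\varepsilon$ carries the residual term $\tfrac12[(iH)(z),W(z,z_t^\ast)](T)$, which only disappears after the $\varepsilon^2$ scaling.

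Once the Stratonovich form is in place, your algebra is the same as the paper's: for $z\in\mathcal H_{\mu_0}$ the pathwise integral reduces to the Cameron--Martin pairing $\langle H(z),z\rangle_{\mu_0}$ and completing the square gives $\tfrac12\|z-H(z)\|_{\mu_0}^2$; your observation that the normalization $E_{\mu_0^\varepsilon}[\exp(-F^\varepsilon/\varepsilon^2)]=1$ forces $\inf_{z}(F_0(z)+\tfrac12\|z\|_{\mu_0}^2)=0$ is a reasonable way to dispose of the additive constant, though it too needs the continuous (Stratonovich) representative of $F_0$ before Varadhan can be invoked. Your worry about the two-sided exponential moment condition is fair, but it applies equally to the paper's argument and is not the decisive defect; the missing It\^o--Stratonovich step is.
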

\begin{proof}
Let $i:\mathcal H_{\mu_0}\to  L^2([0,T],\mathbb R)$ be an isomorphic isometry of separable Hilbert spaces. Denote by $z_t^\ast=i^{-1} ( \chi_{[0,t]})$. Then $W(z_t^\ast)$ is a standard Brownian motion. Furthermore, one can verify that for all $h\in \mathcal H_{\mu_0}$ we have 
$$W(\omega, h)=\int_0^T (ih)(t) dW(\omega, z_t^\ast).$$
Therefore we have that
$$W(\omega, H(\omega))=\int_0^T (iH(\omega))(t) dW(\omega, z_t^\ast).$$
Under the measure $\mu_0^\varepsilon$, we can change to Stratonovich integration to get that
$$\int_0^T (iH(\omega))(t) dW(\omega, z_t^\ast)=\int_0^T (iH(\omega))(t) \circ dW(\omega, z_t^\ast)-\frac{\varepsilon^2}{2}[(iH)(\omega),W(\omega,z_t^\ast)](T).$$
The Stratonovich integral is a continuous function of $\omega$ so long as $H$ is, and so is $\frac{1}{2}\|H\|_{\mu_0}^2$, so therefore Proposition \ref{prop:FW-first} applies and 
\begin{equation*}
    \operatorname{FW}(z)=
    \begin{cases}-\int_0^T (iH(z))(t) \circ dW(z, z_t^\ast)+\frac{1}{2}\|H(z)\|_{\mu_0}^2+\frac{1}{2}\|z\|_{\mu_0}^2&\text{ if }z\in \mathcal H_{\mu_0}\\
    \infty&\text{ else}.
    \end{cases}
\end{equation*}
One may note that for $z\in \mathcal H_{\mu_0}$ we have
\begin{align*}
    \int_0^T (iH(z))(t) \circ dW(z, z_t^\ast)&=-\langle (iH)(z),(iz)\rangle_{L^2}\\
    &=-\langle H(z), z\rangle_{\mu_0}.
\end{align*}
Therefore we arrive at 
\begin{equation*}
    \operatorname{FW}(z)=
    \begin{cases}
    \frac{1}{2}\|z-H(z)\|_{\mu_0}^2&\text{ if } z\in \mathcal H_{\mu_0}\\
    \infty &\text{ else}.
    \end{cases}
\end{equation*}
Finally, note that 
\begin{equation*}
    \operatorname{OM}_\varepsilon(z)=
    \begin{cases}
    \frac{1}{2\varepsilon^2}\|z-H(z)\|_{\mu_0}^2+\frac{1}{2}[(iH)(z), W(z,z_t^\ast)](T)&\text{ if } z\in \mathcal H_{\mu_0}\\
    \infty &\text{ else}.
    \end{cases}
\end{equation*}
\end{proof}
\begin{remark}
    Note that the remainder term $$\varepsilon^2   \operatorname{OM}_\varepsilon(z)-\operatorname{FW}(z)=\frac{\varepsilon^2}{2}[(iH)(z), W(z,z_t^\ast)](T)$$ could be seen as a test of whether $\mu^\varepsilon$ is Gaussian or not. For example, if $iH(z)=b(z(t))$ for some sufficiently regular $b:\mathbb R\to \mathbb R$, as is the case with SDEs, then $$\frac{\varepsilon^2}{2}[(iH)(z), W(z,z_t^\ast)](T)=\frac{\varepsilon^2}{2}\int_0^T b'(z(t))dt.$$ Which is constant as a function of $z$ if and only if $b'$ is constant. That is, if $W(H)-\frac{1}{2}\|H\|_{\mu_0}^2$ is a quadratic function. 

    This is not always the case, as we can show. Letting $\mu_0$ be the law of a Brownian motion on the space of continuous functions, we can consider the density defined by 
    \begin{equation*}
        \Psi(B)=\exp\left(\frac{1}{\varepsilon^2}\left(\int_0^T \phi(t) dB(t)-\frac{1}{2}\int_0^T \phi^2(t) dt\right)\right),
    \end{equation*}
    where $\phi$ is the adapted process defined by
    \begin{equation*}
        \phi(t)=
        \begin{cases}
            B(t)&\text{ if } t\in [0,T/2]\\
            -B(T-t)&\text{ if } t\in [T/2,T].
        \end{cases}
    \end{equation*}
    Then converting from It\^o to Stratonovich yields zero quadratic covariation with $B(t)$, and thus the functions are equal but the measures $\mu^\varepsilon:=\Psi(B)\mu_0^\varepsilon$ are not Gaussian.
\end{remark}
\subsection{Examples}
\subsubsection{Stochastic Differential Equations}
The principal examples of measures satisfying Theorem \ref{theorem:main} are the laws of solutions to stochastic differential equations. There is the classical theory of SDEs driven by Brownian motion that motivates our proof, but our result also extends to stochastic differential equations driven by more general Gaussian processes (see e.g. \cite{Budhiraja-LDP-FBM}), path dependent SDEs (see e.g. \cite{Ma-path-dependent}), the solution to stochastic PDEs driven by Gaussian fields (see e.g. \cite{LDP-SPDE}), among other SDEs. 

We provide one example here for a path-dependent SDE to demonstrate the potential utility of our result. In order to compute explicitly, we restrict to a linear path-dependent SDE which is Gaussian so our full machinary isn't necessary. Nonetheless, modulo solving nonlinear ODEs as in \cite{Ma-path-dependent}, this general approach should in principle be able to be applied to more general path-dependent SDEs. Consider the stochastic process
\begin{equation*}
    X^\varepsilon(t)=\int_0^t a(s)\varepsilon B(s) ds+\varepsilon B(t),
\end{equation*}
where $a$ is continuous. As $X^\varepsilon= \varepsilon X^1$ is Gaussian we know that its Onsager-Machlup and Freidlin-Wentzell functions satisfy $\operatorname{FW}(\cdot)=\varepsilon^2 \operatorname{OM}_\varepsilon(\cdot)=\frac{1}{2}\|\cdot\|_\mu^2$, where $\|\cdot\|_\mu$ is the Cameron-Martin norm associated to $X^1$, but we can still apply Girsanov and our Theorem \ref{theorem:main}.

An application of Girsanov shows that the law of $X^\varepsilon(t)$, $\mu^\varepsilon$ with respect to the law of $\varepsilon B(t)$, $\mu_0^\varepsilon$ is given by

\begin{equation}\label{eq:example-1-girsanov}
\begin{split}
    \frac{d\mu^\varepsilon}{d\mu_0^\varepsilon}=&\exp\left(\frac{1}{\varepsilon^2}\left(\int_0^T \int_0^t e^{-(A(t)-A(s))}dB(s)dB(t) \right.\right. \\ &\qquad\qquad\left.\left. -\frac{1}{2}\int_0^T \left(\int_0^t e^{-(A(t)-A(s))}dB(s)\right)^2 ds\right)\right),
    \end{split}
\end{equation}
where $A'=a$. Converting to Stratonovich integration gives that $\mu_0^\varepsilon$-a.s. we have 
\begin{equation*}
\begin{split}
    \frac{d\mu^\varepsilon}{d\mu_0^\varepsilon}&=\exp\left(\frac{1}{\varepsilon^2}\left(\int_0^T \int_0^t e^{-(A(t)-A(s))}\circ dB(s)\circ dB(t)\right.\right.\\&\qquad\qquad\left. \left.-\frac{1}{2}\int_0^T \left(\int_0^t e^{-(A(t)-A(s))}\circ dB(s)\right)^2 ds\right)\right),
\end{split}
\end{equation*}
so the Onsager-Machlup function is
\begin{equation*}
    \operatorname{OM}_\varepsilon(z)=\frac{1}{2\varepsilon^2}\int_0^T \left(z'(t)-\int_0^t e^{-(A(t)-A(s))} z'(s) ds \right)^2dt=\frac{1}{2\varepsilon^2}\|z\|_\mu^2.
\end{equation*}
Our Theorem \ref{theorem:main} shows that $\varepsilon^2\operatorname{OM}_\varepsilon(\cdot)\to \operatorname{FW}(\cdot)$ both pointwise and as a $\Gamma$-limit (which for emphasis we repeat is known from the fact that $X^\varepsilon$ is Gaussian). For a more general path-dependent SDE
$$dX(t)=b(X(t),B(t),t)dt+dB(t)$$
the density in equation \eqref{eq:example-1-girsanov} is more complicated and in general cannot be written down explicitly. However Girsanov ensures that it exists and a similar procedure would yield the same result.

\subsubsection{System of Random Algebraic Equations}
We conclude with an example demonstrating that the utility of our result extends beyond the situation of SDEs. Let $a_n$ be a sequence of real numbers that is square summable. Let $\xi_n$ be a sequence of i.i.d. standard normal random variables. Then by \cite{hairer2009introduction}, exercise 3.5 we have that the law of $\mathbf g :=(a_1 \xi_1,x_2\xi_2,...)$, $\mu_0$, is a Gaussian measure on the Banach (Hilbert) space $\mathcal B$ of square summable sequences. The Cameron-Martin space of $\mu_0$, $\mathcal H_{\mu_0}$, is the collection of all sequences $\mathbf z= \{a_n^2 \phi_n\}$ for some square summable sequence $\mathbf \phi = \{\phi_n\}$. For each $\mathbf z \in \mathcal H_{\mu_0}$, we have that $W(\mathbf z)=\langle\mathbf \phi, \mathbf g\rangle =\sum_{n=1}^\infty \phi_n a_n \xi_n$ where $W(\mathbf z)$ is understood as in Proposition \ref{prop:generalized-Girsanov-corr}. The Onsager-Machlup for $\mu_0$ by Theorem \ref{corollary:OM-Gaussian-Equivalent} is $\operatorname{OM}_{\mu_0}(\mathbf z)=\frac{1}2\sum_{n=1}^\infty \phi_n^2 a_n^2=\frac{1}2\|\mathbf z\|_{\mu_0}^2$. Let $f_n$ be measurable functions so that $f_n(\xi_n)$ satisfy for all $\varepsilon>0$
\begin{equation*}
    E_{\mu_0}\left[e^{\frac{1}{2\varepsilon^2} \sum f_n^2(\varepsilon \xi_n)a_n^2}\right]<\infty.
\end{equation*}
Denote the law of $\mathbf g^\varepsilon=(a_1 \varepsilon \xi_1, a_2 \varepsilon \xi_2,...)$ by $\mu_0^\varepsilon$ and define the measures
\begin{equation*}
    \mu^\varepsilon=\exp\left(\frac{1}{\varepsilon^2} \left(\sum_{n=1}^\infty f_n(\xi_n) a_n \xi_n-\frac{1}2\sum_{n=1}^\infty f_n^2(\xi_n)a_n^2\right)\right)\mu_0^\varepsilon.
\end{equation*}
Suppose that there exist solutions $x_n$ to the random algebraic equations $x_n^\varepsilon=f_n(x_n^\varepsilon)+\varepsilon a_n \xi_n$. Then $\mu^\varepsilon$ is the law of the random sequence $\mathbf x^\varepsilon=\{x_n^\varepsilon\}$. We have that $\mathbf x^\varepsilon$ satisfies a LDP with rate function $\operatorname{FW}_\mu(\mathbf z)=\frac{1}2\sum_{n=1}^\infty (\phi_n-f(\phi_n))^2 a_n^2=\varepsilon^2 \operatorname{OM}_{\mu^\varepsilon}(\mathbf z)$.

\bibliographystyle{plain}
\bibliography{bibliography}

\end{document}